\newcommand{\off}[1]{}
\def\IR{\relax{\rm I\kern-.18em R}}
\def\p{\partial}
\def\<{\langle}
\def\>{\rangle}
\DeclareMathOperator{\Div}{div}
\begin{document}

\title{Nonlinear Spectral Analysis via One-homogeneous Functionals - Overview and Future Prospects}


\author{Authors
}
\author{Guy Gilboa\thanks{Electrical Engineering Department, Technion – IIT} \and Michael Moeller\thanks{Department of Computer Science, Technische Universit\"at M\"unchen} \and Martin Burger\thanks{Institute for Computational and Applied Mathematics,
University of M\"unster}}
\authorrunning{G. Gilboa, M. Moeller and M. Burger} 

\date{Received: date / Accepted: date}


\maketitle
\bibliographystyle{plain}

\begin{abstract}
We present in this paper the motivation and theory of nonlinear spectral representations, based on convex regularizing functionals.
Some comparisons and analogies are drawn to the fields of signal processing, harmonic analysis and sparse representations.
The basic approach, main results and initial applications are shown. A discussion of open problems and future directions concludes this work.


\keywords{Nonlinear spectral representations \and One-homogeneous functionals \and Total variation \and Nonlinear eigenvalue problem \and Image decomposition.}
\end{abstract}


\section{Introduction}

\off{
 PLAN

\begin{itemize}
\item Rationale - Fourier, SVD, eigenfunction analysis, linear algorithms.
\item Preliminaries
\item The Nonlinear eigenvalue problem
\item Spec TV
\item Spec 1-hom
\item The finite dimensional case
\item An eigenfunction filter view
\item Relations to
\begin{itemize}
\item Signal processing view (delta filters)
\item Wavelets -  general, thresholding of e.f's of l1. Haar wavelets as ef's of TV. Much more restricted, 1D decomp trial - less sparse, will contain more wavelets.
\item Linear ef problems
\item Sparse rep - a functional induces a dictionary (overcomplete?) by its eigenfunctions.
\item Cheeger sets?, ef's minimizes the per/area ratio ?
\end{itemize}
\item How to create eigenfunctions, check them, approximations etc.
\item Applications
\end{itemize}
}

Nonlinear variational methods have provided very powerful tools in the design and analysis of image processing and computer vision algorithms in recent decades.
In parallel, methods based on harmonic analysis, dictionary learning and sparse-representations, as well as spectral analysis of linear operators (such as the graph-Laplacian) have shown tremendous advances in processing highly complex signals such as natural images, 3D data and speech.
Recent studies now suggest variational methods can also be analyzed and understood through a nonlinear generalization of eigenvalue analysis, referred to
as nonlinear spectral methods. Most of the current knowledge is focused on one-homogeneous functionals, which will be the focus of this paper.

The motivation and interpretation of classical linear filtering strategies is closely linked to the eigendecomposition of linear operators. In this manuscript we will show that one can define a nonlinear spectral decomposition framework based on the gradient flow with respect to arbitrary convex 1-homogeneous functionals and obtain a remarkable number of analogies to linear filtering techniques. To closely link the proposed nonlinear spectral decomposition framework to the linear one, let us summarize earlier studies concerning the use of nonlinear eigenfunctions in the context of variational methods.

One notion which is very important is the concept of nonlinear eigenfunctions induced by convex functionals.
Given a convex functional $J(u)$ and its subgradient $\partial J(u)$, we refer to $u$ as an \emph{eigenfunction} if it admits the following eigenvalue problem:
\begin{equation}
\label{eq:ef_problem}
\lambda u  \in \p J(u),
\end{equation}
where $\lambda \in \mathbb{R}$  is the corresponding eigenvalue.

The analysis of eigenfunctions related to non-quadratic convex functionals was mainly concerned with the total variation (TV) regularization.
In the analysis of the variational TV denoising, i.e. the ROF model from \cite{rof92}, Meyer \cite{Meyer[1]} has shown an explicit solution for the case of a disk (an eigenfunction of TV), quantifying explicitly the loss of contrast
and advocating the use of $TV-G$ regularization.
Within the extensive studies of the TV-flow \cite{tvFlowAndrea2001,andreu2002some,bellettini2002total,Steidl,burger2007inverse_tvflow,discrete_tvflow_2012,tvf_giga2010} eigenfunctions of TV (referred to as \emph{calibrable sets}) were analyzed and explicit solutions were given for several cases of eigenfunction spatial settings. In \cite{iss} an explicit solution of a disk for the inverse-scale-space flow is presented, showing its instantaneous appearance at a precise time point related to its radius and height.

\begin{figure}[htb]
\begin{center}
\includegraphics[width=80mm]{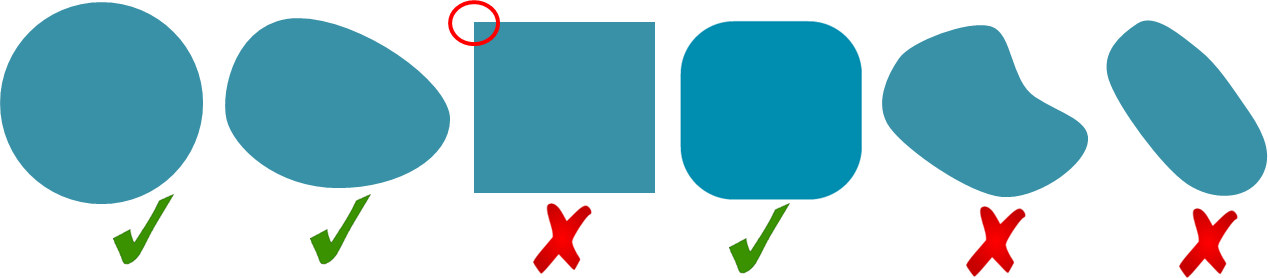}
\caption{Example of valid and non-valid shapes as nonlinear eigenfunctions with respect to the TV functional.
Smooth enough convex sets, which admit condition \eqref{eq:eigen_xi} (marked with a green check), are solutions of the eigenvalue problem \eqref{eq:ef_problem}.
Shapes with either too high curvature, not convex or too elongated (marked with red X) are not valid.
}
\label{fig:tv_efs}
\end{center}
\end{figure}

A highly notable contribution in \cite{bellettini2002total} is the precise geometric definition of convex sets which are eigenfunctions.
Let $\chi_C \in \mathbb{R}^2$ be a characteristic function, then it admits \eqref{eq:ef_problem}, with $J$ the TV functional, if
\begin{equation}
\label{eq:eigen_xi}
\textrm{ess} \sup_{q \in \p C}\kappa(q) \le \frac{P(C)}{|C|},
\end{equation}
where $C$ is convex, $\p C \in C^{1,1}$, $P(C)$ is the perimeter of $C$, $|C|$ is the area of $C$
and $\kappa$ is the curvature. In this case the eigenvalue is $\lambda = \frac{P(C)}{|C|}$. See Fig. \ref{fig:tv_efs} for some examples.
Until today there is no knowledge of additional (perhaps nonconvex sets) which admit the eigenvalue problem related to the TV functional.

In \cite{Muller_thesis,Benning_highOrderTV_2013,Poschl_Scherzer_TGV_1D_2013,Papafitsoros_Bredies_2013} eigenfunctions related to the total-generalized-variation (TGV) \cite{bredies_tgv_2010} and the infimal convolution total variation (ICTV) functional \cite{Chambolle[2]} are analyzed and their different reconstruction properties on particular eigenfunctions of the TGV are demonstrated theoretically as well as numerically.

Examples of certain eigenfunctions for different extensions of the TV to color images are given in \cite{collaborativeTV}.

In \cite{Steidl} Steidl et al. have shown the close relations, and equivalence in a 1D discrete
setting, of the Haar wavelets to both TV regularization and TV flow.
This was later developed for a 2D setting in \cite{Steidl_2D}. The connection between Haar wavelets and TV methods in 1D was made more precise in \cite{Benning_Burger_2013}, who indeed showed that the Haar wavelet basis is an orthogonal basis of eigenfunctions of the total variation (with appropriate definition at the domain boundary) - in this case the Rayleigh principle holds for the whole basis. In the field of morphological signal processing, nonlinear transforms were introduced in \cite{Dorst_Boomgaard_Slope_trans_1994,Kothe_Morph_SS_ECCV1996}.


\section{Spectral Representations}
\subsection{Scale Space Representation}
We will use the scale space evolution, which is straightforward, as the canonical case of spectral representation.
Consider a convex (absolutely) one-homogeneous functional $J$, i.e. a functional for which $J(\lambda u) = |\lambda| J(u)$ holds for all $\lambda \in \mathbb{R}$.

The \textit{scale space} or \textit{gradient flow} is
\begin{align}
\label{eq:scaleSpace}
\partial_t u(t) = -p(t), \ \ \ p(t) \in \partial J(u(t)), \ u(0)=f.
\end{align}
We refer the reader to \cite{wk_book,ak_book02} for an overview over scale space techniques.

\label{sec:SpectralScaleSpace}

A spectral representation based on the gradient flow formulation \eqref{eq:scaleSpace} was the first work towards defining a nonlinear spectral decomposition and has been conducted in \cite{Gilboa_SSVM_2013_SpecTV,Gilboa_spectv_SIAM_2014} for the case of $J$ being the TV regularization. In our conference paper \cite{spec_one_homog15}, we extended this notion to general 1-homogeneous functionals by observing, that the solution of the gradient flow can be computed explicitly for any 1-homogeneous $J$
 in the case of $f$ being an eigenfunction. For $\lambda f \in \partial J(f)$, the solution to \eqref{eq:scaleSpace} is given by
\begin{align}
\label{eq:linearBehavior}
u(t) = \left\{
\begin{array}{cc}
(1 -  t \lambda ) f & \text{ for } t\leq \frac{1}{\lambda}, \\
0 & \text{ else. }
\end{array}
\right.
\end{align}
Note that in linear spectral transformations such as Fourier or Wavelet based approaches, the input data being an eigenfunction leads to the energy of the spectral representation being concentrated at a single wavelength. To preserve this behavior for nonlinear spectral representations, the wavelength decomposition of the input data $f$ is defined by
\begin{equation}
\label{eq:phi}
\phi(t) = t \partial_{tt} u(t).
\end{equation}
Note that due to the piecewise linear behavior in \eqref{eq:linearBehavior}, the wavelength representation of an eigenfunction $f$ becomes $\phi(t) = \delta(t-\frac{1}{\lambda})f$, where $\delta$ denotes a Dirac delta distribution. The name \textit{wavelength} decomposition is natural because for $\lambda f \in \p J(f)$, $\|f\|=1$, one readily shows that $\lambda = J(f)$, which means that the eigenvalue $\lambda$ is corresponding to a generalized frequency. In analogy to the linear case, the inverse relation of a peak in $\phi$ appearing at $t = \frac{1}{\lambda}$ motivates the interpretation as a wavelength,
as discussed in more details in the following section.

For arbitrary input data $f$ one can reconstruct the original image by:
\begin{equation}
\label{eq:tv_recon}
f(x) = \int_0^\infty \phi(t;x) dt.
\end{equation}
Given a transfer function $H(t)\in \mathbb{R}$, image filtering can be performed by
\begin{equation}
\label{eq:tv_filt}
f_H(x) := \int_0^\infty H(t)\phi(t;x) dt.
\end{equation}

\section{Signal processing analogy}
Up until very recently, nonlinear filtering approaches such as \eqref{eq:scaleSpace} or related variational methods have been treated independently of the classical linear point of view of changing the representation of the input data, filtering the resulting representation and inverting the transform.
 In \cite{Gilboa_SSVM_2013_SpecTV,Gilboa_spectv_SIAM_2014} it was proposed to use \eqref{eq:scaleSpace} in the case of $J$ being the total variation to define a TV spectral representation of images that allows to extend the idea of filtering approaches from the linear to the nonlinear case.
This was later generalized to one-homogeneous functionals in \cite{spec_one_homog15}.

Classical Fourier filtering has some very convenient properties for analyzing and processing signals:
\begin{enumerate}
\item Processing is performed in the transform (frequency) domain by simple attenuation or amplification of desired frequencies
\item A straightforward way to visualize the frequency activity of a signal is through its spectrum plot. The spectral energy is preserved in
the frequency domain, through the well-known Parseval's identity.
\item The spectral decomposition corresponds to the coefficients representing the input signal in a new orthonormal basis.
\item Both, transform and inverse-transform are linear operations.
\end{enumerate}
In the nonlinear setting the first two characteristics are mostly preserved, orthogonality is still an open issue (where some results were obtained)
and linearity is certainly lost. In essence we obtain a \emph{nonlinear} forward transform and a \emph{linear} inverse transform.
Thus following the nonlinear decomposition,
filtering can be performed easily.

In addition, we gain edge-preservation and new scale features, which, unlike sines and cosines, are data-driven and are therefore highly
adapted to the image. Thus the filtering has far less tendency to create oscillations and artifacts.

Let us first derive the relation between Fourier and the eigenvalue problem \eqref{eq:ef_problem}.
For
$$J(u)=\frac{1}{2}\int |\nabla u(x)|^2 dx,$$
we get $-\Delta u \in \p J(u)$. Thus, with appropriate boundary conditions, Fourier frequencies are eigenfunctions, in the sense of \eqref{eq:ef_problem},
where, for a frequency $\omega$, we have the relation $\lambda = \omega^2$.
Other convex regularizing functionals, such as TV and TGV, can therefore be viewed as natural nonlinear generalizations.

A fundamental concept in linear filtering is \emph{ideal filters} \cite{Rabiner_dsp_book_1975}.
Such filters either retain or diminish completely frequencies within some range.
In a linear time (space) invariant system, a filter is fully determined by its transfer function
$H(\omega)$. The filtered response of a signal $f(x)$, with Fourier transform $F(\omega)$, filtered by a filter $H$ is
$$ f_H(x) := \mathcal{F}^{-1}\left(H(\omega)\cdot F(\omega) \right),$$
with $\mathcal{F}^{-1}$ the inverse Fourier transform.
For example, an ideal low-pass-filter retains all frequencies up to some cutoff frequency $\omega_c$.
Its transfer function is thus
\begin{align*}
H(\omega) = \left\{
\begin{array}{cc}
1 & \text{ for } 0 \le \omega \le \omega_c, \\
0 & \text{ else. }
\end{array}
\right.
\end{align*}
Viewing frequencies as eigenfunctions in the sense of \eqref{eq:ef_problem} one can
define generalizations of these notions.

\subsection{Nonlinear ideal filters}
The (ideal) low-pass-filter (LPF) can be defined by Eq. \eqref{eq:tv_filt} with $H(t)=1$ for $t \ge t_c$ and 0 otherwise, or
\begin{equation}
\label{eq:lpf}
LPF_{t_c}(f) := \int_{t_c}^\infty \phi(t;x) dt.
\end{equation}
Its complement, the (ideal) high-pass-filter (HPF), is defined by
\begin{equation}
\label{eq:hpf}
HPF_{t_c}(f) := \int_{0}^{t_c} \phi(t;x) dt.
\end{equation}
Similarly, band-(pass/stop)-filters are filters with low and high cut-off scale parameters ($t_1 < t_2$)
\begin{equation}
\label{eq:bpf}
BPF_{t_1,t_2}(f) := \int_{t_1}^{t_2} \phi(t;x) dt,
\end{equation}
\begin{equation}
\label{eq:bsf}
BSF_{t_1,t_2}(f) := \int_0^{t_1}\phi(t;x) dt + \int_{t_2}^\infty \phi(t;x) dt.
\end{equation}

For $f$ being a single eigenfunction with eigenvalue $\lambda_0 = \frac{1}{t_0}$ the above definitions
coincide with the linear definitions, where the eigenfunction is either completely preserved or
 completely diminished, depending on the cutoff eigenvalue(s) of the filter.

\begin{figure}[htb]
\begin{center}
\begin{tabular}{ cc }
\includegraphics[width=40mm]{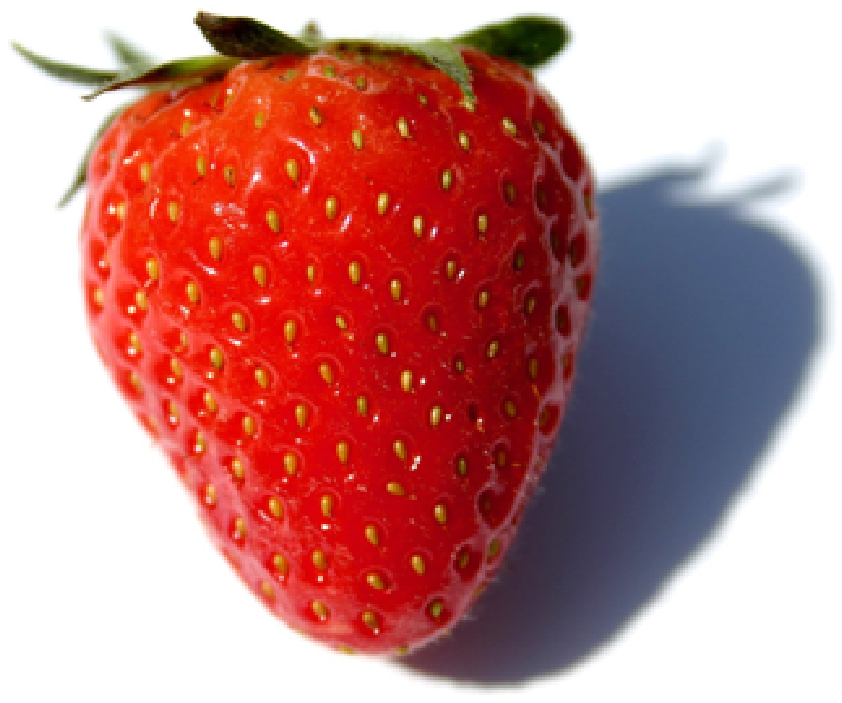}&
\includegraphics[width=40mm]{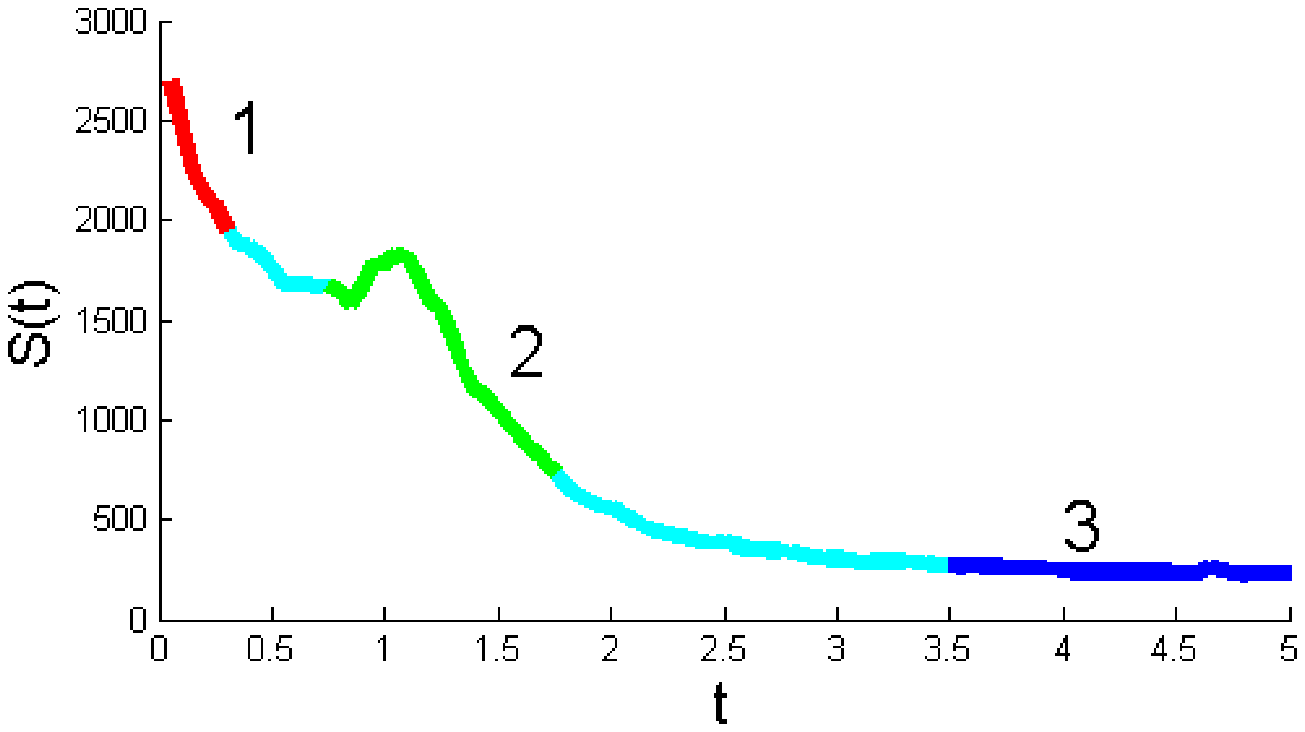}\\
Input $f$ &  $S(t)$\\
\includegraphics[width=40mm]{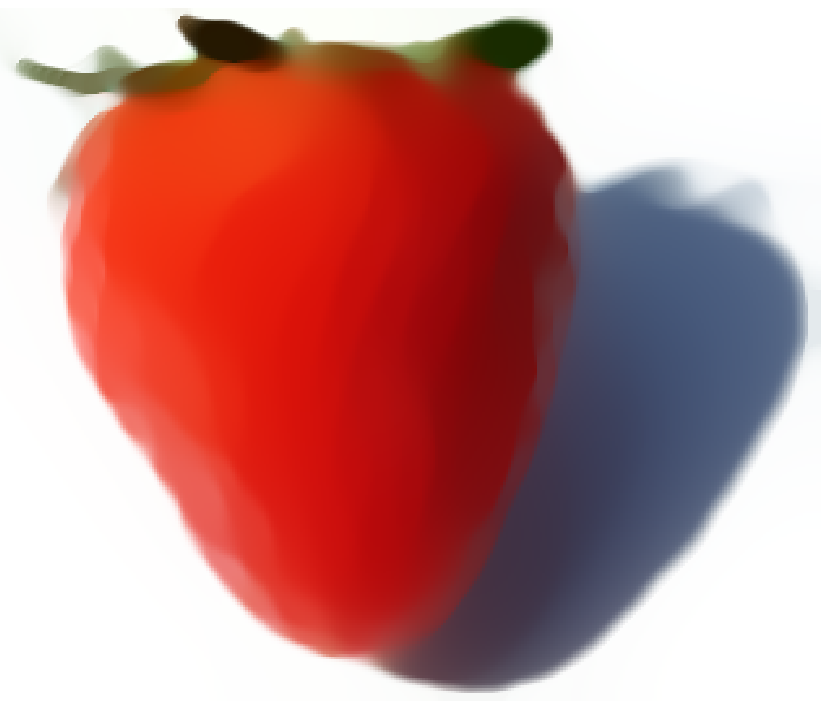}&
\includegraphics[width=40mm]{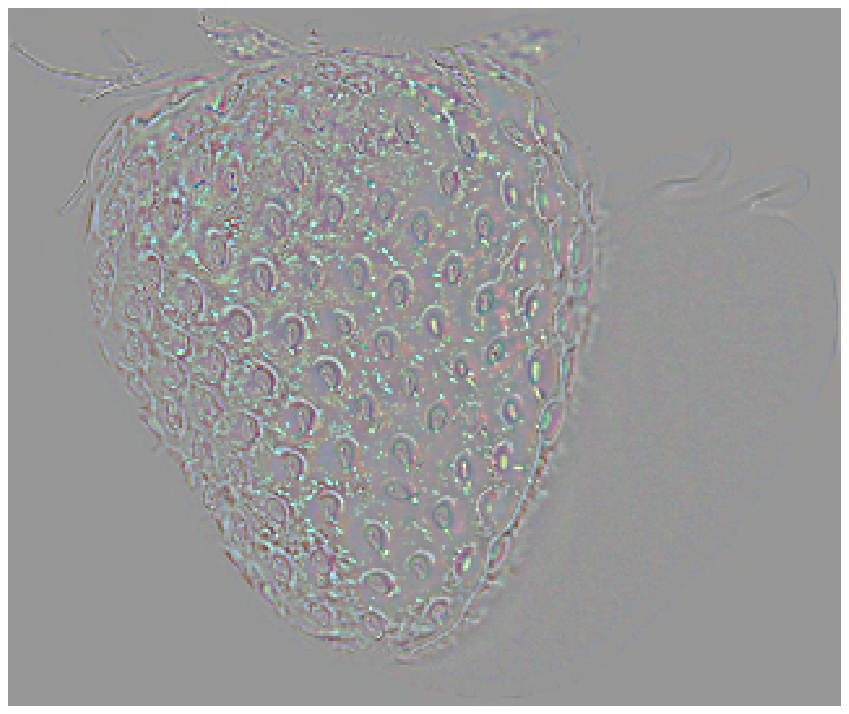}\\
Low-pass & High-pass\\
\includegraphics[width=40mm]{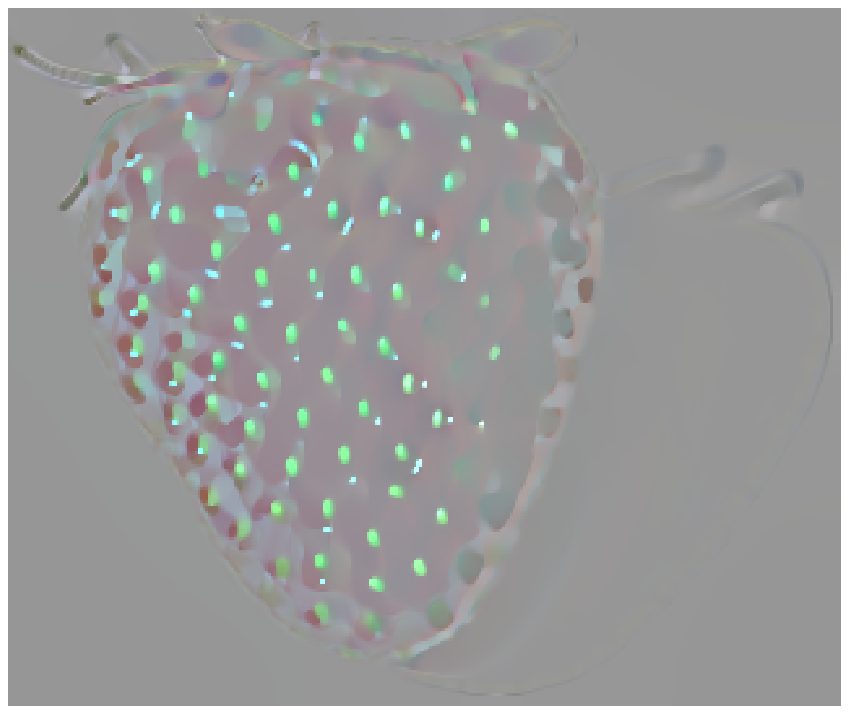}&
\includegraphics[width=40mm]{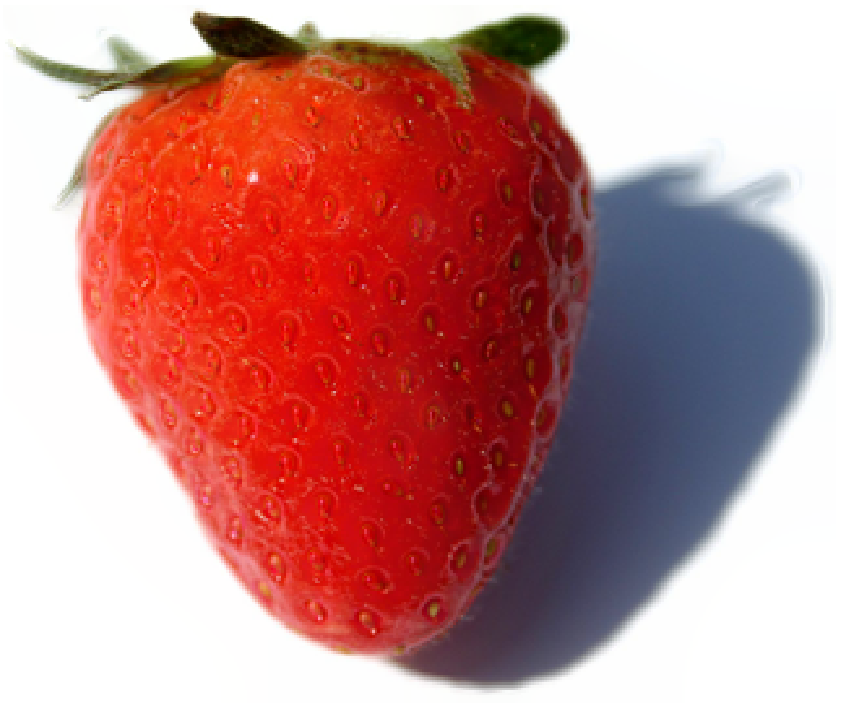}\\
Band-pass & Band-stop \\
\end{tabular}
\caption{Total-variation spectral filtering example. The input image (top left) is decomposed into its $\phi(t)$ components, the corresponding spectrum
$S(t)$ is on the top right. Integration of the $\phi$'s over the $t$ domains 1, 2 and 3 (top right) yields high-pass, band-pass and low-pass filters, respectively. The band-stop filter (bottom right) is the complement integration domain of region 2. Taken from \cite{spec_one_homog15}.}
\label{fig:strawberry}
\end{center}
\end{figure}

In Fig. \ref{fig:strawberry} an example of spectral TV processing is shown with the response of the four filters defined above in Eqs. \eqref{eq:lpf} through \eqref{eq:bsf}.

\subsection{Spectral response}

As in the linear case, it is very useful to measure in some sense the ``activity" at each frequency (scale).
This can help identify dominant scales and design better the filtering strategies (either manually or automatically).
Moreover, one can obtain a notion of the type of energy which is preserved in the new representation using some analog of
Parseval's identity.

In \cite{Gilboa_SSVM_2013_SpecTV,Gilboa_spectv_SIAM_2014} a $L^1$ type spectrum was suggested for the TV spectral framework
(without trying to relate to a Parseval rule),
\begin{equation}
\label{eq:S1}
S_1(t) := \|\phi(t;x)\|_{L^1(\Omega)} = \int_\Omega |\phi(t;x)|dx.
\end{equation}

In \cite{spec_one_homog15} the following definition was suggested,
\begin{equation} \label{eq:S2}
	S_2(t) = t \sqrt{ \frac{d^2}{dt^2}J(u(t)) } = \sqrt{\langle \phi(t), 2 t p(t)} \rangle.
\end{equation}
With this definition the following analogue of the Parseval identity was shown:
\begin{eqnarray}
	\Vert f \Vert^2 & = & - \int_0^\infty \frac{d}{dt} \Vert u(t) \Vert^2 ~dt \nonumber \\
    & = & 2 \int_0^\infty \langle p(t), u(t) \rangle ~dt \nonumber \\
    &= & 2 \int_0^\infty J(u(t))~dt \nonumber \\
    & = & \int_0^\infty S_2(t)^2  ~dt.	
\end{eqnarray}

In \cite{BEGM_1hom_SIAM_submitted} a third definition for the spectrum was suggested (which is simpler and admits Parseval),
\begin{equation} \label{eq:S3}
	S_3(t) =  \sqrt{\langle \phi(t), f } \rangle.
\end{equation}

It can be shown that a similar Parseval-type equality holds here
\begin{eqnarray}
	\Vert f \Vert^2 & = & \langle f,f \rangle =  \langle \int_0^\infty \phi(t)dt,f \rangle =  \int_0^\infty \langle \phi(t),f \rangle dt \nonumber \\
    & = & \int_0^\infty S_3(t)^2  ~dt.
\end{eqnarray}

As an overview example, Table \ref{tab:compare} summarizes the analogies of the Fourier and total variation based spectral transformation. There remain many open questions of further possible generalizations regarding Fourier-specific results, some of which are listed like the convolution theorem, Fourier duality property (where the transform and inverse transform can be interchanged up to a sign), the existence of phase in Fourier and more.

\begin{table*}[htbp]
\begin{center}
\begin{tabular}{|p{1.3in}||p{1.7in}|p{1.65in}|}
\hline
& {\bf TV transform} & {\bf Fourier transform}\\
\hline \hline
Transform & Gradient-flow: & $F(\omega) = \int f(x)e^{-i\omega x}dx$ \\
          & $\phi(t;x)=t \p_{tt}u$, $u_t \in -\p J(u)$. & \\
          & $u|_{t=0}=f$, (more representations in Table 1)& \\
Inverse transform & $f(x) = \int_0^\infty \phi(t;x)dt + \bar{f}$& $f(x) = \frac{1}{(2\pi)^n}\int F(\omega)e^{i \omega x}d\omega$\\
Eigenfunctions & $\lambda u \in \p J(u)$ & $e^{i\omega x}$ \\
Spectrum - amplitude & Three alternatives:  & $|F(\omega)|$\\
& $S_1(t)=\|\phi(t;x)\|_{L^1}$,   & \\
&$S_2(t)^2=\langle \phi(t), 2 t p(t)\rangle$, & \\
&$S_3(t)^2=\langle \phi(t), f \rangle$. & \\
Translation $f(x-a)$& $\phi(t;x-a)$& $F(\omega)e^{-ia \omega}$\\
Rotation  $f(Rx)$& $\phi(t;Rx)$, &$F(R\omega)$\\
Contrast change $a f(x)$& $\phi(t/a;x)$   & $a F(\omega)$\\
Spatial scaling  $f(ax)$& $a \phi(a t;a x)$ & $ \frac{1}{|a|}F(\frac{\omega}{a})$\\  
Linearity $f_1(x)+f_2(x)$ & Not in general & $F_1(\omega)+F_2(\omega)$\\
Parseval's identity  & $\int |f(x)|^2 dx = \int S_2(t)^2 dt = \int S_3(t)^2 dt$ & $\int |f(x)|^2 dx = \int |F(\xi)|^2 d\xi$, $\xi := \frac{\omega}{2\pi}$\\
Orthogonality  & $\langle \phi(t),u(t) \rangle = 0$ & $\langle e^{i\omega_1 x},e^{i\omega_2 x} \rangle = 0$, $\omega_1 \ne \omega_2$ \\
\hline
{\bf Open issues (some)}&  &\\
\hline
Duality & \center{--}  & $F(x) \leftrightarrow f(-\xi)$, $\xi := \frac{\omega}{2\pi}$ \\
Convolution  & \center{--} & $f_1(x) * f_2(x) \leftrightarrow F_1(\omega)F_2(\omega)$ \\
Spectrum - phase & \center{--} & $\tan^{-1}(\Im(F(\omega))/\Re(F(\omega)))$\\
\hline
\end{tabular}
\caption{ \label{tab:compare}
Some properties of the TV transform in $\IR^n$, compared to the Fourier transform.}
\end{center}
\end{table*}

\section{Decomposition into eigenfunctions}
As discussed in the section 3, item 3, one of the fundamental interpretations of linear spectral decompositions arises from it being the coefficients for representing the input data in a new basis. This basis is composed of the eigenvectors of the transform.  For example, in the case of the cosine transform, the input signal is represented by a linear combination of cosines with increasing frequencies. The corresponding spectral decomposition consists of the coefficients in this representation, which therefore admit an immediate interpretation.

Although the proposed nonlinear spectral decomposition does not immediately correspond to a change of basis anymore, it is interesting to see that the property of representing the input data as a linear combination of (generalized) eigenfunctions can be preserved: In \cite{spec_one_homog15} we showed that for the case of $J(u) = \|V u\|_1$ and $V$ being any orthonormal matrix, the solution of the scale space flow \eqref{eq:scaleSpace} meets
\begin{equation} \label{eq:gfWithOrthogonalL1Vu}
 Vu(t) = \text{sign}(\zeta) \; \max(|\zeta| - t, 0),
 \end{equation}
where $\zeta = Vf$ are the coefficients for representing $f$ in the orthonormal basis of $V$.
It is interesting to see that the subgradient $p(t)$ in \eqref{eq:scaleSpace} admits a componentwise representation of $Vp(t)$ as
\begin{equation} \label{eq:gfWithOrthogonalL1Vp}
 (Vp)_i(t) = \left\{ \begin{array}{ll}
\text{sign}(\zeta_i) & \text{ if } |\zeta_i| \geq t, \\
0 & \text{ else.}
\end{array} \right.
 \end{equation}
The latter shows that $p(t)$ can be represented by $V^T q(t)$ for some $q(t)$ which actually meets $q(t) \in \partial \|Vp(t)\|_1$. In a single equation this means $p(t) \in \partial J(p(t))$ and shows that the $p(t)$ arising from the scale space flow are eigenfunctions (up to normalization). Integrating the scale space flow equation \eqref{eq:scaleSpace} from zero to infinity and using that there are only finitely many times at which $p(t)$ changes, one can see that one can indeed represent $f$ as a linear combination of eigenfunctions. Our spectral decomposition $\phi(t)$ then corresponds to the \textit{change} of the eigenfunctions during the (piecewise) dynamics of the flow.

While the case of $J(u) = \|Vu\|_1$ for an orthogonal matrix $V$ is quite specific because it essentially recovers the linear spectral analysis exactly, we are currently investigating extensions of the above observations to more general types of regularizations in \cite{BEGM_1hom_SIAM_submitted}. In particular, the above results seem to remain valid in the case where $J(u)=\|Vu\|_1$ and $VV^*$ is diagonally dominant.

%
%

\section{Explicit TV Eigenfunctions in 1D}
Here we give an analytic expression of a large set of eigenfunctions of TV for $x \in \mathbb{R}$.
We will later see that Haar wavelets are a small subset of these, hence eigenfunctions are expected to represent signals more
concisely, with much fewer elements.

We give the presentation below in a somewhat informal manner, which we find more clear.
Note for instance that all the derivatives should be understood in a weak sense. A formal presentation of
the theory of TV eigenfunctions can be found e.g. in \cite{bellettini2002total}.

The TV functional can be expressed as
\begin{equation}
\label{eq:tv_z}
J_{TV}(u) = \sup_{\|z\|_{L^\infty(\Omega)\le 1}} \langle u, \Div z \rangle,
\end{equation}
with $\Div z \in L^2$ \cite{tvFlowAndrea2001,bellettini2002total}. Let $z_u$ be an argument admitting the supremum of \eqref{eq:tv_z}
then it immediately follows that $\Div z_u \in \p J(u)$: in the one homogeneous case
we need to show $\langle u, \Div z_u \rangle = J(u)$ which is given in \eqref{eq:tv_z};
and in addition that for any $v$ in the space we have $J(v)\ge \langle v,p \rangle$, $p\in \p J(u)$,
  $$J_{TV}(v) = \sup_{\|z\|_{L^\infty(\Omega)\le 1}} \langle v, \Div z \rangle \ge \langle v, \Div z_u \rangle.$$
From here on we will refer to $z_u$ simply as $z$.

To understand better what $z$ stands for, we can check the case of smooth $u$ and perform integration by parts in \eqref{eq:tv_z} 
to have
$$ J_{TV}(u) = \langle \nabla u, -z \rangle. $$
Then as $z$ also maximizes $\langle \nabla u, -z \rangle $, we can solve this pointwise, taking into
account that $|z(x)| \le 1$ and that the inner product of a vector is maximized for a vector at the same angle, to have
\begin{align}
z(x) = \left\{
\begin{array}{cc}
- \frac{\nabla u}{|\nabla u|} & \text{ for } \nabla u(x) \ne \bf{0}, \\
\in[-1,1] & \nabla u(x) = \bf{0}.
\end{array}
\right.
\end{align}

\begin{figure}[htb]
\begin{center}
\begin{tabular}{ ccc }
\includegraphics[width=23mm]{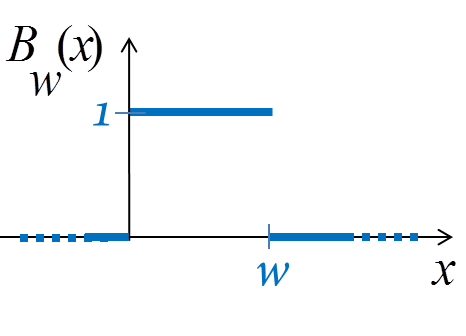}&
\includegraphics[width=23mm]{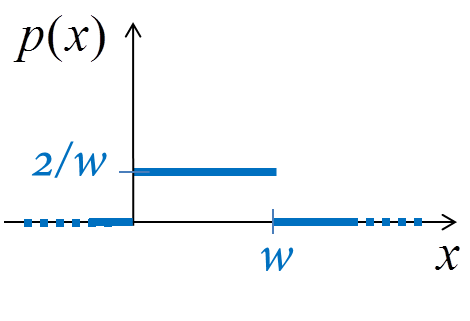}&
\includegraphics[width=23mm]{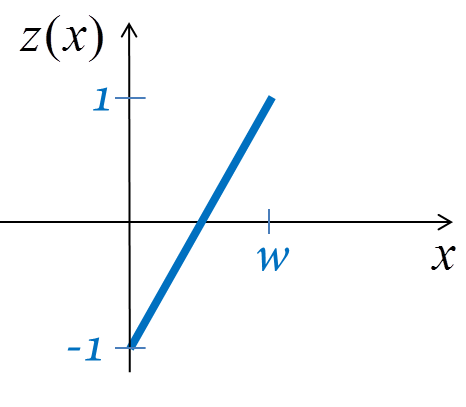}\\
$B_w(x)$ &  $p(x)$ & $z(x)$  \\
\end{tabular}
\caption{A classical single peak example of a TV eigenfunction in $\mathbb{R}$, $u(x)=B_w(x)$.}
\label{fig:single_peak}
\end{center}
\end{figure}

\subsection{Single peak}
Let us define the following function of a single unit peak of width $w$
\begin{align}
\label{eq:Bw}
B_w(x) = \left\{
\begin{array}{cc}
1 & x \in [0,w) \\
0 & \textrm{otherwise}.
\end{array}
\right.
\end{align}

Then it is well known (e.g. \cite{Meyer[1]}) that any function of the type $h\cdot B_w(x-x_0)$
is an eigenfunction of TV in $x\in\mathbb{R}$ with eigenvalue $\lambda = \frac{2}{hw}$.
Let us show how this is shown in terms of $z$. First, we should note that outside the support of the function, for
$x\in \Omega_0 = \mathbb{R} \setminus [w,w+x_0)$ we can use similar arguments of Lemma 6 in \cite{bellettini2002total} to have
$\Div z|_{x\in \Omega_0}=0$. It can be shown that $z$ defined by
\begin{align*}
z(x) = \frac{2(x-x_0)}{w}-1, \,\,\, x \in [x_0,x_0+w),
\end{align*}
and $\p_x z=0$ otherwise is a supremum in \eqref{eq:tv_z}, with the following subgradient
\begin{align}
p(x) = \left\{
\begin{array}{cc}
\p_x z = \frac{2}{w}, & x \in [0,w) \\
0, & \textrm{otherwise}.
\end{array}
\right.
\end{align}
Hence $p(x)=\lambda h  B_w(x-x_0)$ admits the eigenvalue problem \eqref{eq:ef_problem}.

\begin{figure}[htb]
\begin{center}
\begin{tabular}{ ccc }
\includegraphics[width=23mm]{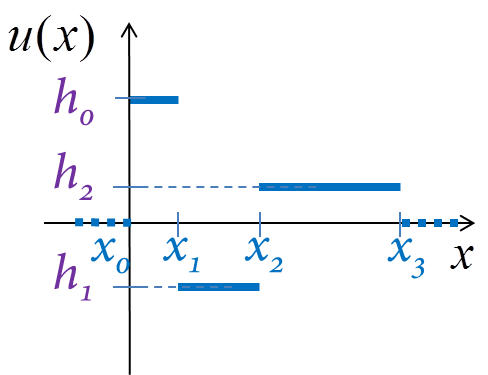}&
\includegraphics[width=23mm]{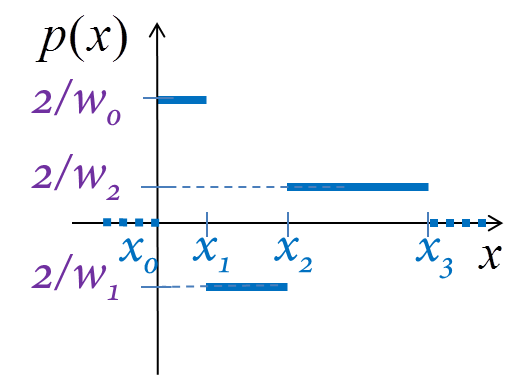}&
\includegraphics[width=23mm]{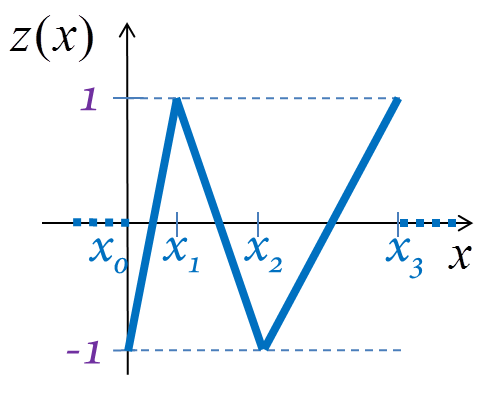}\\
$u(x)$ &  $p(x)$ & $z(x)$  \\
\end{tabular}
\caption{Illustration of Prop. \ref{prop:1d_ef}, a TV eigenfunction $u(x)$ in $\mathbb{R}$.}
\label{fig:u_ef}
\end{center}
\end{figure}

\begin{figure}[htb]
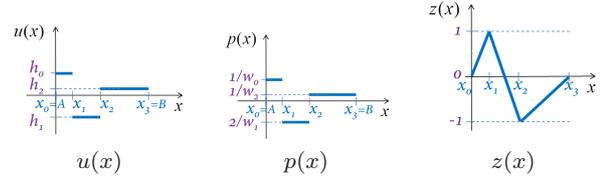

\begin{center}
\begin{tabular}{ ccc }
\includegraphics[width=23mm]{u_ef_bd.eps}&
\includegraphics[width=23mm]{u_ef_p_bd.eps}&
\includegraphics[width=23mm]{u_ef_z_bd.eps}\\
$u(x)$ &  $p(x)$ & $z(x)$  \\
\end{tabular}
\caption{A TV eigenfunction $u(x)$ in a bounded domain.}
\label{fig:u_ef_bd}
\end{center}
\end{figure}

\begin{figure}
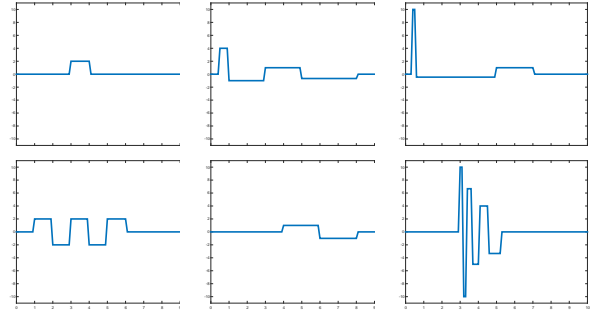

\begin{center}
	\tabcolsep0.3mm
	\begin{tabular}{ccc} 	
	\includegraphics[width=25mm]{efs_1d_1.eps} &
	\includegraphics[width=25mm]{efs_1d_2.eps} &
	\includegraphics[width=25mm]{efs_1d_3.eps} \\
	\includegraphics[width=25mm]{efs_1d_4.eps} &
	\includegraphics[width=25mm]{efs_1d_5.eps} &
	\includegraphics[width=25mm]{efs_1d_6.eps} \\
	\end{tabular}
	\caption{A few examples of functions meeting $f\in \partial J_{TV}(f)$ in $\mathbb{R}$.}
	\label{fig:tv_efs_1d}
\end{center}
\end{figure}

\subsection{Set of 1D eigenfunctions}
Generalizing this analysis one can construct for any eigenvalue $\lambda$
an infinite set of piece-wise constant eigenfunctions (with a compact support).
\begin{proposition}
\label{prop:1d_ef}
Let $-\infty < x_0 < x_1,..\,  < x_n < \infty$ be a set of $n+1$ points on the real line.
Let
\begin{equation}
\label{eq:u_ef}
u(x) =  \sum_{i=0}^{n-1} h_i  B_{w_i}(x-x_i),
\end{equation}
with $B_w(\cdot)$ defined in \eqref{eq:Bw}, $ w_i = x_{i+1}-x_i,$ and
\begin{equation}
\label{eq:h_ef}
h_i = \frac{2 (-1)^i}{\lambda w_i}.
\end{equation}
Then for $J$ the TV functional, $u(x)$ admits the eigenvalue problem \eqref{eq:ef_problem}.
\end{proposition}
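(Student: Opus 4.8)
The plan is to mimic the single-peak computation: I would construct an explicit dual field $z$ with $\|z\|_{L^\infty(\Omega)}\le 1$ that attains the supremum in \eqref{eq:tv_z} for this particular $u$, and then verify that the induced subgradient $p=\Div z=\p_x z$ coincides with $\lambda u$. Once both facts are in place, the two-part argument already used for the single peak applies verbatim: the identity $\langle u,\Div z\rangle=J_{TV}(u)$ together with the automatic inequality $J_{TV}(v)\ge\langle v,\Div z\rangle$ gives $p\in\p J_{TV}(u)$, and $p=\lambda u$ turns this into the eigenvalue relation \eqref{eq:ef_problem}.

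First I would define $z$ on each cell $[x_i,x_{i+1})$ to be the affine function of slope $\p_x z=\lambda h_i=\tfrac{2(-1)^i}{w_i}$, and to be constant (hence with vanishing derivative) on $(-\infty,x_0)$ and $(x_n,\infty)$. Starting from $z(x_0)=-1$, the choice \eqref{eq:h_ef} is engineered so that on each piece $z$ climbs or descends by exactly $\lambda h_i w_i=2(-1)^i=\pm 2$; consequently $z$ reaches $+1$ at $x_1$, returns to $-1$ at $x_2$, and in general $z(x_i)=(-1)^{i+1}$. The alternating factor $(-1)^i$ in \eqref{eq:h_ef} is precisely what makes these endpoint values match across every breakpoint, so that $z$ is a continuous, piecewise-affine function oscillating between $-1$ and $+1$; in particular $\|z\|_{L^\infty}\le 1$, with equality only at the $x_i$.

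Next I would check that this $z$ attains the supremum in \eqref{eq:tv_z}. Integrating by parts, $\langle u,\Div z\rangle=-\langle\p_x u,z\rangle$, and since $u$ is piecewise constant its distributional derivative is a sum of Diracs at the $x_i$ weighted by the signed jumps of $u$. A short computation shows $\sgn$ of the jump of $u$ at $x_i$ equals $(-1)^i$ (an upward jump at even $i$, a downward jump at odd $i$), so $z(x_i)=(-1)^{i+1}=-\sgn$ of that jump for every $i$. Hence each term of $-\langle\p_x u,z\rangle$ contributes its absolute value and $\langle u,\Div z\rangle=\int|\p_x u|=J_{TV}(u)$, which yields $\Div z\in\p J_{TV}(u)$ exactly as in the single-peak case. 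Finally, on $[x_i,x_{i+1})$ one has $p=\p_x z=\tfrac{2(-1)^i}{w_i}=\lambda h_i=\lambda u$, and $p=0=\lambda u$ off the support, so $p=\lambda u$ everywhere and $\lambda u=p\in\p J_{TV}(u)$.

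The routine verifications (the slope values and $\|z\|\le1$) are straightforward; the one point deserving care is the bookkeeping at the breakpoints. The crux is that the width-dependent heights \eqref{eq:h_ef} force the slope of $z$ on each cell to be exactly the value of magnitude $2/w_i$ that carries $z$ from one end of $[-1,1]$ to the other over a cell of width $w_i$, while the sign $(-1)^i$ synchronizes the direction of travel with the jump pattern of $u$. I expect the main obstacle to be making the endpoint and boundary treatment rigorous: the continuity of $z$ across each $x_i$, the choice of the constant extensions outside $[x_0,x_n)$ (or the corresponding boundary values in a bounded domain, cf. Fig.~\ref{fig:u_ef_bd}), and the justification that $\Div z=0$ off the support, which follows by the argument of Lemma~6 in \cite{bellettini2002total}.
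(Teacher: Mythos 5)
Your construction is exactly the paper's: the same piecewise-affine ``zigzag'' $z$ with slope $(-1)^i\tfrac{2}{w_i}$ on $[x_i,x_{i+1})$, endpoint values $z(x_i)=(-1)^{i+1}$, and constant extension off the support, followed by reading off $p=\p_x z=\lambda u$. Your additional verification that $z$ attains the supremum in \eqref{eq:tv_z} by matching $z(x_i)$ against the signs of the jumps of $u$ correctly fills in a step the paper only asserts ``in a similar manner to the single peak case,'' so the proposal is correct and takes essentially the same route.
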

\begin{proof}
One can construct the following $z$ in the shape of ``zigzag'' between $-1$ and $1$ at points $x_i$,
\begin{align*}
z(x) = (-1)^i\left( \frac{ 2(x-x_i)}{w_i}-1\right), \,\,\, x \in [x_i,x_{i+1}),
\end{align*}
and $\p_x z=0$ otherwise. In a similar manner to the single peak case we get the subgradient element in $\p J(u)$
\begin{align}
p(x) = \left\{
\begin{array}{cc}
\p_x z = (-1)^i \frac{2}{w_i}, & x \in [x_i,x_{i+1}) \\
0, & x \notin [x_0,x_n).
\end{array}
\right.
\end{align}
This yields $p(x) = \lambda u(x)$.
\end{proof}

See Fig. \ref{fig:u_ef}, \ref{fig:u_ef_bd}, and \ref{fig:tv_efs_1d} for some examples.

\subsubsection{The bounded domain case}
The unbounded domain is easier to analyze in some cases, however in practice we can implement only
signals with a bounded domain. We therefore give the formulation for $u \in \Omega=[A,B) \subset \mathbb{R}$.
Requiring $J(1)=\langle p,1 \rangle=0$ and $\langle u, p \rangle = \langle -\nabla u, z \rangle$
leads to the boundary condition $z|_{\partial \Omega}=0$.

Thus, on the boundaries we have $z=0$ with half the slope of the unbounded case (see Fig. \ref{fig:u_ef_bd}), all other
derivations are the same.
Setting $x_0 = A$, $x_n =B$ we get the solution of \eqref{eq:u_ef} with $h_i$ defined slightly differently as
\begin{equation}
\label{eq:h_ef_bounded}
h_i = \frac{2a_i (-1)^i}{\lambda w_i},
\end{equation}
where $a_i=\frac{1}{2}$ for $i\in \{1,n-1 \}$ and $a_i=1$ otherwise.
See a numerical convergence of $\lambda p$ to $u$ in Fig. \ref{fig:ef_numeric}.

{\bf Remark: } Note that if $u$ is an eigenfunction so is $-u$ so the formulas above are all valid also
with the opposite sign.

\section{Wavelets and Sparse Representation Analogy}
\subsection{Wavelets and hard thresholding}
As we have seen in section 4, the gradient flow with respect to regularizations of the form $J(u) = \|Vu\|_1$ has a closed form solution for any orthogonal matrix $V$.
From equation \eqref{eq:gfWithOrthogonalL1Vp} one deduces that
$$ \phi(t) = \sum_i \zeta_i \delta(t - |\zeta_i|) v_i, $$
where $v_i$ are the rows of the matrix $V$, and $\zeta_i = (Vf)_i$. In particular, using definition \eqref{eq:S3} we obtain
$$ (S_3(t))^2 = \sum_i (\zeta_i)^2 \delta(t - |\zeta_i|).$$
Peaks in the spectrum therefore occur ordered by the magnitude of the coefficients $\zeta_i = (Vf)_i$: The smaller $|\zeta_i|$ the earlier it appears in the wavelength representation $\phi$. Hence, applying an ideal low pass filter \eqref{eq:lpf} on the spectral representation with cutoff wavelength $t_c$ is exactly the same as hard thresholding by $t_c$, i.e. setting all coefficients $\zeta_i$ with magnitude less than $t_c$ to zero.


\subsubsection{Haar wavelets}
In general, wavelet functions are usually continuous and cannot represent discontinuities very well.
A special case are the Haar wavelets which are discontinuous and thus are expected to represent much better discontinuous signals.
The relation between Haar wavelets and TV regularization in one dimension has been investigated, most notably in \cite{Steidl}.
In higher dimensions, there is no straightforward analogy, in the case of isotropic TV, as it is not separable like wavelet decomposition
(thus one gets disk-like shapes as eigenfunctions as oppose to rectangles).

We will show here that even in the one-dimensional case TV eigenfunctions can represent signals in a much more concise manner.


Let $\psi_H(x)$ be a Haar mother wavelet function defined by
\begin{align}
\label{eq:Haar}
\psi_H(x) = \left\{
\begin{array}{cc}
1 & x \in [0,\frac{1}{2}), \\
-1 & x \in [\frac{1}{2},1), \\
0 & \textrm{otherwise}.
\end{array}
\right.
\end{align}
For any integer pair $n,k \in \mathbb{Z}$, a Haar function $\psi_H^{n,k}(x) \in \mathbb{R}$ is defined by
\begin{align}
\label{eq:Haar_nk}
\psi_H^{n,k}(x) = 2^{n/2}\psi_H(2^n x - k).
\end{align}

We now draw the straightforward relation between Haar wavelets and TV eigenfunctions.
\begin{proposition}
\label{prop:haar}
A Haar wavelet function $\psi_H^{n,k}$ is an eigenfunction of TV with eigenvalue $\lambda = 2^{(2+n/2)}$.
\end{proposition}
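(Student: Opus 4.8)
The plan is to recognize $\psi_H^{n,k}$ as a special case of the piecewise-constant eigenfunction family already constructed in Proposition \ref{prop:1d_ef}, so that the statement follows by matching parameters rather than by building a subgradient from scratch. The advantage is that Proposition \ref{prop:1d_ef} has already exhibited the ``zigzag'' field $z$ and verified that $p(x)=\lambda u(x)\in\p J(u)$ for the whole family; all that remains is to check that the Haar wavelet sits inside this family and to read off the eigenvalue.

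First I would write $\psi_H^{n,k}$ out explicitly. From the dilation-translation rule $\psi_H^{n,k}(x)=2^{n/2}\psi_H(2^n x-k)$ and the definition of $\psi_H$, the function is supported on $[k\,2^{-n},(k+1)2^{-n})$, taking the value $+2^{n/2}$ on the left half $[k\,2^{-n},(k+\tfrac12)2^{-n})$ and $-2^{n/2}$ on the right half $[(k+\tfrac12)2^{-n},(k+1)2^{-n})$. Thus $\psi_H^{n,k}$ is a two-block piecewise-constant function with equal widths $w_0=w_1=w:=2^{-(n+1)}$ and heights $h_0=+2^{n/2}$, $h_1=-2^{n/2}$, which is exactly the form \eqref{eq:u_ef} with two intervals ($x_0=k\,2^{-n}$, $x_1=(k+\tfrac12)2^{-n}$, $x_2=(k+1)2^{-n}$) and the sign alternation matching the factor $(-1)^i$.

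It then remains to impose the height condition \eqref{eq:h_ef}, namely $h_i=\tfrac{2(-1)^i}{\lambda w_i}$, and solve for $\lambda$. From $h_0=2^{n/2}=\tfrac{2}{\lambda w}$ with $w=2^{-(n+1)}$ I obtain $\lambda=\tfrac{2}{w\,2^{n/2}}=\tfrac{2\cdot 2^{n+1}}{2^{n/2}}=2^{2+n/2}$, and I would confirm that the same $\lambda$ is consistent with the second block, $h_1=-\tfrac{2}{\lambda w}=-2^{n/2}$. With the parameters matched, Proposition \ref{prop:1d_ef} guarantees $\lambda\psi_H^{n,k}\in\p J(\psi_H^{n,k})$, i.e.\ \eqref{eq:ef_problem} holds with $\lambda=2^{2+n/2}$.

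There is essentially no deep obstacle here; the proof is bookkeeping, and the only place to be careful is the normalization. I would double-check that the factor $2$ appearing in \eqref{eq:h_ef} is correctly combined with the $L^2$-type scaling $2^{n/2}$ of the Haar basis, since a slip in either constant changes the exponent of $\lambda$. I would also note explicitly that, because $\psi_H^{n,k}$ has compact support in $\mathbb{R}$, it is the unbounded-domain construction of Proposition \ref{prop:1d_ef} that applies directly (no boundary correction of the type \eqref{eq:h_ef_bounded} is needed), so the equal-width, equal-height two-block structure yields the stated eigenvalue without modification.
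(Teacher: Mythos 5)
Your proposal is correct and follows essentially the same route as the paper: both identify $\psi_H^{n,k}$ as the two-block instance of Proposition \ref{prop:1d_ef} with $h=2^{n/2}$, $w=2^{-(n+1)}$, and read off $\lambda=\frac{2}{hw}=2^{2+n/2}$. Your extra remarks on consistency of the second block and on the unbounded-domain setting are sound but not needed beyond what the paper states.
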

\begin{proof}
One can express the mother wavelet as
$$ \psi_H(x) = B_{\frac{1}{2}}(x)-B_{\frac{1}{2}}(x-\frac{1}{2}),$$
and in general, any Haar function as
$$ \psi_H^{n,k}(x) = h^n\left(B_{w^n}(x-x_0^{n,k})-B_{w^n}(x-x_0^{n,k}-w^n)\right),$$
with $h^n = 2^{(n/2)}$, $w^n=2^{-(n+1)}$ and $x_0^{n,k}=2^{-n}k$.
Thus, based on Prop. \ref{prop:1d_ef}, we have that for any $n,k \in \mathbb{Z}$, $\psi_H^{n,k}$ is
an eigenfunction of TV with $\lambda = \frac{2}{h^n w^n}= 2^{(2+n/2)}$.
\end{proof}

\begin{figure}
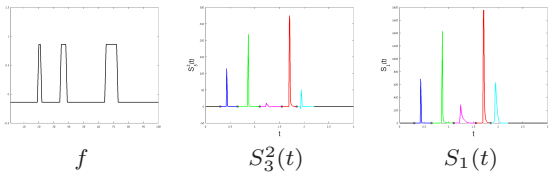

\begin{center}
	\tabcolsep0.3mm
	\begin{tabular}{ccc} 	
	\includegraphics[width=25mm]{3peaks_f.eps} &
	\includegraphics[width=25mm]{3peaks_S3.eps} &
	\includegraphics[width=25mm]{3peaks_S1.eps} \\
   $f$ & $S_3^2(t)$ & $S_1(t)$ \\
	\end{tabular}
	\caption{Decomposition example. }
	\label{fig:1d_decompA}
\end{center}
\end{figure}

\begin{figure}
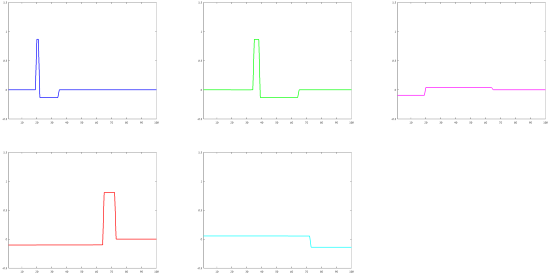

\begin{center}
	\tabcolsep0.3mm
	\begin{tabular}{ccc} 	
	\includegraphics[width=25mm]{3peaks_phi1.eps} &
	\includegraphics[width=25mm]{3peaks_phi2.eps} &
	\includegraphics[width=25mm]{3peaks_phi3.eps} \\
	\includegraphics[width=25mm]{3peaks_phi4.eps} &
	\includegraphics[width=25mm]{3peaks_phi5.eps} \\
	\end{tabular}
	\caption{Decomposing $f$ (Fig. \ref{fig:1d_decompA} left) into the 5 elements represented by the peaks in the spectrum using spectral TV. }
	\label{fig:1d_decompB}
\end{center}
\end{figure}

\begin{figure}
\begin{center}
	\tabcolsep0.3mm
	\includegraphics[width=75mm]{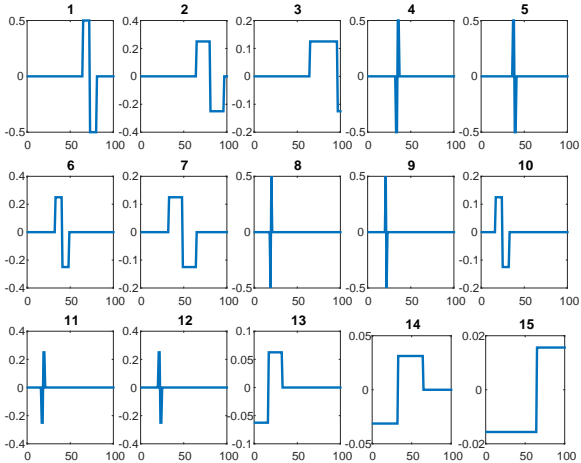}
	\caption{Decomposing $f$ (Fig. \ref{fig:1d_decompA} left) with Haar wavelets using 15 elements. }
	\label{fig:1d_decompC}
\end{center}
\end{figure}

\begin{figure}[htb]
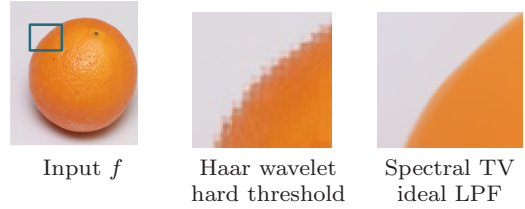

\begin{center}
\begin{tabular}{ ccc }
\includegraphics[width=20mm,height=20mm]{orange_f2_sq.eps}&
\includegraphics[width=20mm,height=18mm]{orange_haar2.eps}&
\includegraphics[width=19mm,height=18mm]{orange_spectv2.eps}\\
Input $f$ &  Haar wavelet & Spectral TV \\
& hard threshold & ideal LPF \\
\end{tabular}
\caption{Comparison of wavelet Haar hard-thresholding to spectral TV hard-thresholding (ideal LPF).
Although both representations can handle well discontinuities, the spectral TV representation is better adapted
to the image edges and produces less artifacts.
}
\label{fig:orange_haar}
\end{center}
\end{figure}

In Figs. \ref{fig:1d_decompA}, \ref{fig:1d_decompB} and \ref{fig:1d_decompC} we show the decomposition of a signal $f$ composed of 3 peaks of different width.
The TV spectral decomposition shows 5 numerical deltas (corresponding to the 5 elements depicted in  Fig. \ref{fig:1d_decompB}). On the other hand, Haar wavelet decomposition needs 15 elements to represent this signal, thus the representation is less sparse.
In the 2D case, Fig. \ref{fig:orange_haar}, we see the consequence of an inadequate representation, which is less adapted to the data, where Haar thresholding is compared to ideal TV low-pass filtering, depicting blocky artifacts in the Haar case.

\subsection{Sparse representation}
A functional induces a dictionary by its eigenfunctions. Let us formalize this notion by the following definition,
\begin{definition}[Eigenfunction Dictionary]
Let $\mathcal{D}_J$ be a dictionary of functions in a Banach space $\mathcal{X}$, with respect to a convex functional $J$, defined as the set of
all eigenfunctions,
$$\mathcal{D}_J := \bigcup_{\lambda \in \mathbb{R}}\{ u \in \mathcal{X}\, |\, \lambda u \in \p J(u)  \}.$$
\end{definition}

We can thus ask ourselves some questions such as
\begin{enumerate}
\item What functions can be reconstructed by a linear combination of functions in the dictionary $\mathcal{D}_J$?
\item Is $\mathcal{D}_J$ an overcomplete dictionary?
\item How many elements are needed to express some type of functions (or how sparse are they in this dictionary)?
\end{enumerate}
To most of these questions we do not have today a full answer.
Some immediate results can be derived by the analogy to Haar wavelets.

\begin{corollary}
For $J_{TV}$ being the TV functional, any $f\in L^2(\mathbb{R})$ can be approximated up to a desired error $\varepsilon$ by a finite linear combination of elements from  $\mathcal{D}_{J_{TV}}$.
Moreover, $\mathcal{D}_{J_{TV}}$ is an overcomplete dictionary (so the linear combination is not unique).
\end{corollary}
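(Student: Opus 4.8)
The plan is to leverage Proposition~\ref{prop:haar}, which places the entire Haar wavelet system inside the dictionary $\mathcal{D}_{J_{TV}}$, together with the classical fact that the Haar wavelets $\{\psi_H^{n,k}\}_{n,k\in\mathbb{Z}}$ form a complete orthonormal basis of $L^2(\mathbb{R})$ (no separate scaling function is required once all dyadic scales $n\in\mathbb{Z}$ are admitted on the full line). The approximation claim then reduces to completeness of this basis, while the overcompleteness claim reduces to exhibiting a single nontrivial linear relation among dictionary elements.

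For the first assertion, I would expand an arbitrary $f\in L^2(\mathbb{R})$ in the Haar basis,
\begin{equation*}
f = \sum_{n,k\in\mathbb{Z}} c_{n,k}\,\psi_H^{n,k}, \qquad c_{n,k} = \langle f, \psi_H^{n,k}\rangle,
\end{equation*}
with convergence in the $L^2$ norm. Given $\varepsilon>0$, completeness guarantees a finite index set $\Lambda$ with $\big\| f - \sum_{(n,k)\in\Lambda} c_{n,k}\,\psi_H^{n,k}\big\|_{L^2} \le \varepsilon$. By Proposition~\ref{prop:haar} each $\psi_H^{n,k}\in\mathcal{D}_{J_{TV}}$, so this partial sum is a finite linear combination of dictionary elements, which proves the approximation statement.

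For overcompleteness it suffices to produce distinct elements of $\mathcal{D}_{J_{TV}}$ satisfying a linear dependence, since this at once yields two different finite representations of the same function. Recalling from the single-peak analysis that every scaled indicator $h\,B_w(x-x_0)$ (with $B_w$ as in \eqref{eq:Bw}) is an eigenfunction, I would simply subdivide an interval: the three indicators $B_2(x)$, $B_1(x)$ and $B_1(x-1)$ all lie in $\mathcal{D}_{J_{TV}}$, yet
\begin{equation*}
B_2(x) = B_1(x) + B_1(x-1)
\end{equation*}
holds pointwise. Hence $B_2$ admits both the trivial one-term representation and the two-term representation above, the representation is not unique, and $\mathcal{D}_{J_{TV}}$ is overcomplete.

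This corollary is essentially a bookkeeping consequence of the earlier results, so I do not expect a serious obstacle; the only points requiring care are invoking the completeness of the Haar system on all of $\mathbb{R}$ in the correct form (all scales, no coarse-scale scaling function) and noting that a truncated expansion has only finitely many nonzero coefficients, so it is a genuinely finite combination. One could alternatively make the overcompleteness more emphatic by observing that an interval may be subdivided at any interior point, producing a whole continuum of distinct linear dependences among eigenfunctions.
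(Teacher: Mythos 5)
Your proposal is correct and follows essentially the same route as the paper: the approximation claim is obtained by noting that the Haar wavelets lie in $\mathcal{D}_{J_{TV}}$ (Prop.~\ref{prop:haar}) and invoking completeness of the Haar system, and overcompleteness is shown by subdividing an indicator $B_w$ into smaller indicators, your relation $B_2 = B_1(\cdot) + B_1(\cdot-1)$ being the $m=2$ case of the paper's identity $B_w(x)=\sum_{i=0}^{m-1}B_{w/m}(x-i\tfrac{w}{m})$. The only added value is your explicit care about using all dyadic scales on $\mathbb{R}$ so that no scaling function is needed, which the paper leaves implicit.
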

This follows from the fact that Haar wavelets are a subset of $\mathcal{D}_{J_{TV}}$. Thus for the first assertion one can use only Haar wavelets from the dictionary to reconstruct $f$
and use the wavelet reconstruction property \cite{Chui_wavelet_book}.
For the second assertion, any $B_w(x)$ can be decomposed into other elements in the dictionary, for instance, by $B_w(x) = \sum_{i=0}^{m-1} B_\frac{w}{m}(x-i\frac{w}{m})$, so there are many
(non-sparse) ways to reconstruct $f$.


\section{Rayleigh Quotients and SVD Decomposition}

In the classical case, the Rayleigh quotient is defined by
\begin{equation}
\label{eq:Rayleigh}
	R_M(v) := \frac{v^T M v}{v^T v},
\end{equation}
where for the real-valued case $M$ is a symmetric matrix and $v$ is a nonzero vector.
It can be shown that, for a given $M$, the Rayleigh quotient reaches its minimum value at $R_M(v)=\lambda_1$,
where $\lambda_1$ is the minimal eigenvalue of $M$ and $v=v_1$ is the corresponding eigenvector (and similarly for the maximum).

One can generalize this quotient to functionals, similar as in \cite{Benning_Burger_2013}, by
\begin{equation}
\label{eq:Rayleigh_J1}
	R_J(u) := \frac{J(u)^2}{\|u\|^2},
\end{equation}
where $\|\cdot \|$ is the $L^2$ norm. We restrict the problem to the non-degenerate case where
$J(u)>0$, that is, $u$ is not in the null-space of $J$.

To find a minimizer, an alternative formulation can be used (as in the classical case):
\begin{equation}
\label{eq:Rayleigh_J2}
	\min_u \{ J(u) \} \textrm{ s.t. } \|u\|^2=1.
\end{equation}
Using Lagrange multipliers, the problem can be recast as
$$	\min_u \{ J(u) - \frac{\lambda}{2}\left( \|u\|^2 - 1 \right) \},$$
with the optimality condition
$$  0 \in \partial J(u) - \lambda u,$$
which coincides with the eigenvalue problem \eqref{eq:ef_problem}. Indeed, one obtains the minimal nonzero eigenvalue as a minimum of the generalized Rayleigh quotient \eqref{eq:Rayleigh_J2}.

The work by Benning and Burger in \cite{Benning_Burger_2013} considers more general variational reconstruction problems involving a linear operator in the data fidelity term, i.e.,
$$\min_u \frac{1}{2}\|Au-f\|_2^2 + t \ J(u), $$
and generalizes equation \eqref{eq:ef_problem} to
$$ \lambda A^*A u \in \partial J(u), \qquad \|Au\|_2=1,$$
in which case $u$ is called a \textit{singular vector}. Particular emphasis is put on the \textit{ground states}
$$ u^0 = \text{arg}\min \limits_{ \substack{u \in \text{kern}(J)^\perp,\\ \|Au\|_2=1}} J(u) $$
for semi-norms $J$, which were proven to be singular vectors with the smallest possible singular value. Although the existence of a ground state (and hence the existence of singular vectors) is guaranteed for all reasonable $J$ in regularization methods, it was shown that the Rayleigh principle for higher singular values fails. As a consequence, determining or even showing the existence of a basis of singular vectors remains an open problem for general semi-norms $J$.

In the setting of nonlinear eigenfunctions for one-homogeneous functionals, the Rayleigh principle for the second eigenvalue (given the smallest eigenvalue $\lambda_1$ and ground state $u_1$)
\begin{equation}
\label{eq:Rayleigh_higher}
	\min_u \{ J(u) \} \textrm{ s.t. } \|u\|^2=1, \langle u, u_1 \rangle = 0 .
\end{equation}
With appropriate Lagrange multipliers $\lambda$ and $\mu$ we obtain the solution as a minimizer of
$$	\min_u \{ J(u) - \frac{\lambda}{2}\left( \|u\|^2 - 1 \right) + \mu \langle u, u_1 \rangle \},$$
with the optimality condition
$$ \lambda u - \mu u_1  = p  \in \partial J(u). $$
We oberve that we can only guarantee $u$ to be an eigenvector of $J$ if $\mu = 0$, which is not guaranteed in general. A scalar product with $u_1$ and the orthogonality constraint yields $\mu = - \langle p, u_1 \rangle$, which only needs to vanish if
$J(u) = \Vert u \Vert$, but not for general one-homogeneous $J$.

An example of a one-homogeneous functional failing to produce the Rayleigh principle for higher eigenvalues (which can be derived from the results in \cite{Benning_Burger_2013}) is given by
$J: \mathbb{R}^2 \rightarrow \mathbb{R}$,
$$ J(u) = \Vert D u \Vert_1, \qquad D = \left( \begin{array}{cc} 1 & - 2 {\epsilon} \\ 0 & \frac{1}{\epsilon} \end{array} \right), $$
for $0 < \epsilon < \frac{1}2$.
The ground state $u=(u^1,u^2)$ minimizes $\vert u^1 - 2\epsilon u^2\vert + \frac{1}\epsilon \vert u^2 \vert $ subject to $\Vert u \Vert_2 = 1$.
It is easy to see that $u=\pm (1,0)$ is the unique ground state, since by the normalization and the triangle inequality
\begin{align*}
 1 &= \Vert u \Vert_2 \leq \Vert u \Vert_1 \leq |u^1 - 2\epsilon u^2|+(1+2\epsilon )| u^2 |\\
 &\leq |u^1 - 2\epsilon u^2|+\frac{1}\epsilon | u^2 |,
\end{align*}
and the last inequality is sharp if and only if $u_2  =0$. Hence the only candidate $v$ being normalized and orthogonal to $u$ is given by $v = \pm (0,1)$.  Without restriction of generality we consider $v=(0,1)$. Now, $Dv = (-2\epsilon, \frac{1}\epsilon)$ and hence
$$ \partial J(v) = \{D^T (-1,1)\} = \{(-1,2\epsilon + \frac{1}\epsilon)\}, $$
which implies that there cannot be a $\lambda > 0$ with $\lambda v \in \partial J(v)$. A detailed characterization of functionals allowing for the Rayleigh principle for higher eigenvalues is still an open problems as well as the question whether there exists an orthogonal basis of eigenvectors in general (indeed this is the case for the above example as well).

\section{Cheeger sets and spectral clustering}

As mentioned in the introduction, there is a large literature on calibrable sets respectively Cheeger sets, whose characteristic function is an eigenfunction of the total variation functional, compare \eqref{eq:eigen_xi}. Particular interest is of course paid to the lowest eigenvalues and their eigenfunctions (ground states), which are related to solutions of the isoperimetric problem, easily seen from the relation $\lambda = \frac{P(C)}{|C|}$. Due to scaling invariance of $\lambda$, the shape of $C$ corresponding to the minimal eigenvalue can be determined by minimizing $P(C)$ subject to $|C|=1$. Hence, the ground states of the total variation functional are just the characteristic functions of balls in the isotropic setting. Interesting variants are obtained with different definitions of the total variation via anisotropic vector norms. E.g. if one uses the $\ell^1$-norm of the gradient in the definition, then it is a simple exercise to show that the ground state is given by characteristic functions of squares with sides parallel to the coordinate directions. In general, the ground state is the characteristic function of the Wulff shape related to the special vector norm used (cf. \cite{belloni2003isoperimetric,esedoglu2004decomposition})  and is hence a fingerprint of the used metric.

On bounded domains or in (finite) discrete versions of the total variation the situation differs, a trivial argument shows that the first eigenvalue is simply zero, with a constant given as the ground state. Since this is not interesting, it was even suggested to redefine the ground state as an eigenfunction of the smallest non-zero eigenvalue (cf. \cite{Benning_Burger_2013}). The more interesting quantity is the second eigenfunction, which is orthogonal to the trivial ground state (i.e. has zero mean). In this case the Rayleigh-principle always works (cf. \cite{Benning_Burger_2013}) and we obtain
\begin{equation}
	\lambda_2 = \inf_{u, \int u = 0} \frac{J(u)}{\Vert u \Vert_2}.
\end{equation}
Depending on the underlying domain on which total variation is considered (and again the precise definition of total variation) one obtains the minimizer as a function positive in one  part and negative in another part of the domain, usually with constant values in each. Hence, the computation of the eigenvalue can also be interpreted as some optimal cut through the domain.

The latter idea had a strong impact in data analysis (cf. \cite{szlam2009total,bresson2010total,bresson2012convergence,jost2014nonlinear,rangapuram2015constrained}), where the total variation is defined as a discrete functional on a graph, hence one obtains a relation to graph cuts and graph spectral clustering. For data given in form of a weighted graph, with weights $w_{ij}$ on an edge between two vertices $i,j$ in the vertex set ${\cal W}$, the total variation is defined as
\begin{equation}
	J(u) = \sum_{i,j \in {\cal V}} w_{ij} |u_i - u_j|
\end{equation}
for a function $u: {\cal V} \rightarrow \mathbb{R}$. Then a graph cut is obtained from a minimizing $u$ in
\begin{equation}
	\lambda_2 = \inf_{u, \sum u_i = 0} \frac{J(u)}{\Vert u \Vert_2}, \label{spectralcluster}
\end{equation}
i.e. an eigenfunction corresponding to the second eigenvalue. The cut directly yields a clustering into the two classes
\begin{equation}
	{\cal C}_+ = \{ i \in {\cal V}~|~u_i > 0 \}, \qquad {\cal C}_+ = \{ i \in {\cal V}~|~u_i < 0 \}.
\end{equation}
Note that in this formulation the graph cut technique is completely analogous to the classical spectral clustering technique based on second eigenvalues of the graph Laplacian (cf. \cite{von2007tutorial}). In the setting \eqref{spectralcluster}, the spectral clustering based on the graph Laplacian can be incorporated by using the functional
 \begin{equation}
	J(u) = \sqrt{ \sum_{i,j \in {\cal V}} w_{ij} |u_i - u_j|^2}.
\end{equation}

Some improvements in spectral clustering are obtained if the normalization in the eigenvalue problem is not done in the Euclidean norm (or a weighted variant thereof), but some $\ell^p$-norm, i.e.,
\begin{equation}
	\lambda_2 = \inf_{u, \sum u_i = 0} \frac{J(u)}{\Vert u \Vert_p}. \label{spectralclusterp}
\end{equation}
We refer to \cite{jost2014nonlinear} for an overview, again the case $p=1$ has received particular attention.
In the case of graph-total-variation, see a recent initial analysis and a method to construct certain type of concentric eigenfunctions in \cite{spec_nltv15}.
A thorough theoretical study of eigenvalue problems and spectral representations for such generalized eigenvalue problems is still an open problem, as for any non-Hilbert space norm. The corresponding gradient flow becomes a doubly nonlinear evolution equation of the form
\begin{equation}
	0 \in  \partial \Vert \partial_t u \Vert_p + \partial J(u) .
\end{equation}
The gradient flow for $p=1$ and total variation in the continuum is related to the $L^1$-TV model, which appears to have interesting multiscale properties (cf. \cite{yin2007total}).

A challenging problem is the numerical computation of eigenvalues and eigenfunctions in the nonlinear setup, which is the case also in the general case beyond the spectral clustering application. Methods reminiscent of the classical inverse power method in eigenvalue computation (cf. e.g. \cite{hein2010inverse}) have been proposed and are used with some success, but it is difficult to obtain and guarantee convergence to the smallest or second eigenvalue in general.

\begin{figure}[htb]
\begin{center}
\begin{tabular}{ cc }
\includegraphics[width=56mm]{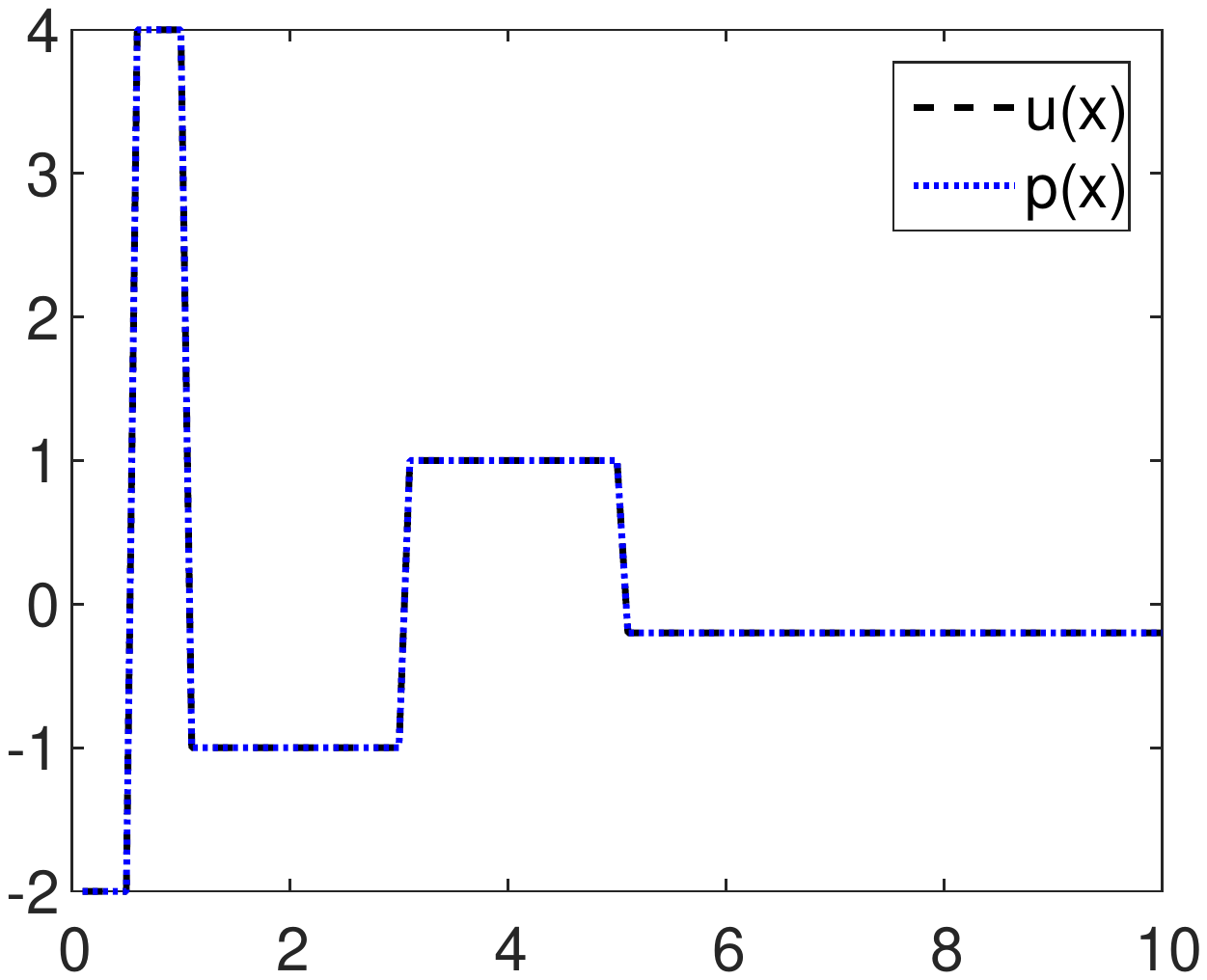}\\
$u(x)$ and $p(x)$ at convergence\\
\includegraphics[width=60mm]{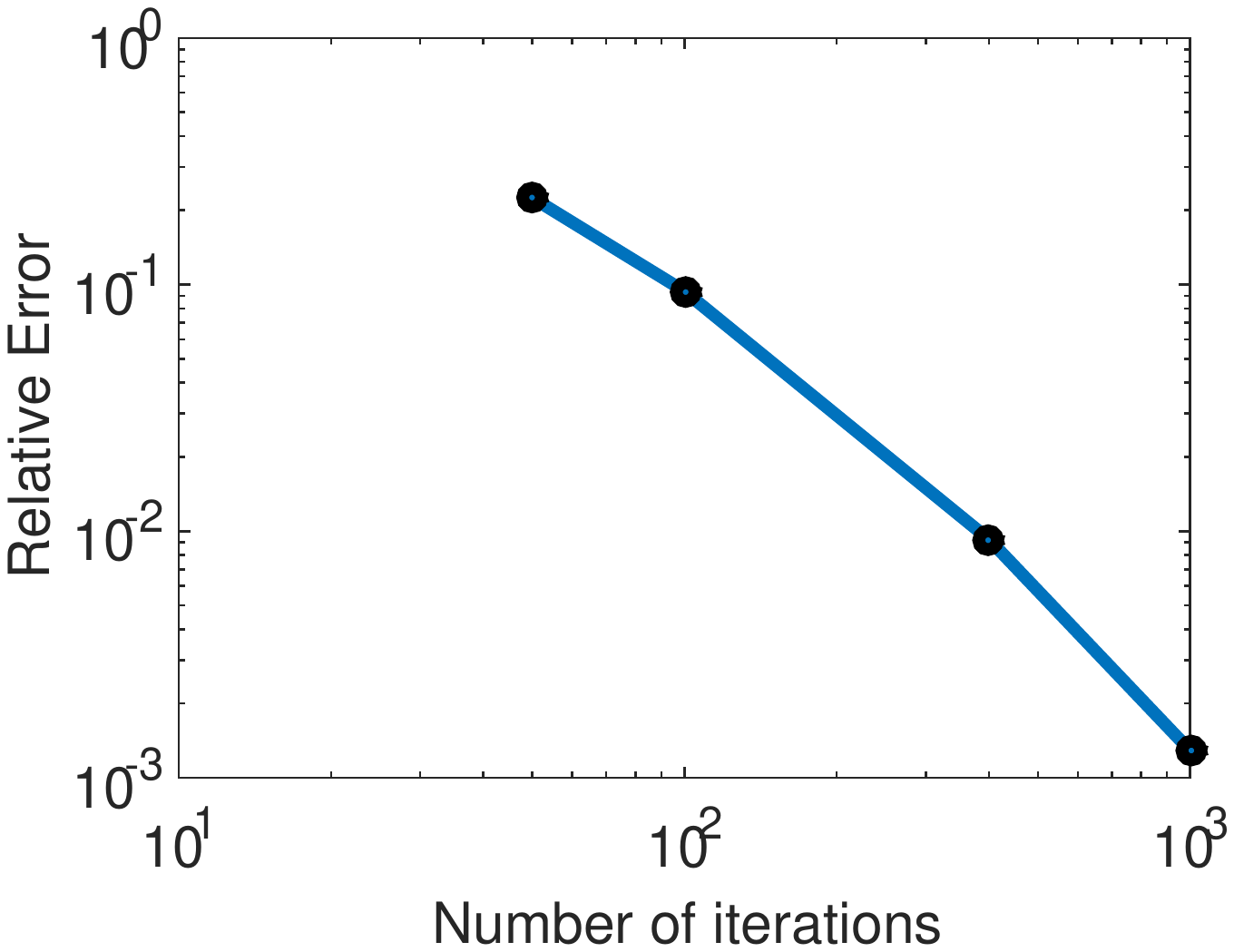}\\
Relative absolute error to the solution at\\
convergence as a function of iterations.\\
\includegraphics[width=60mm]{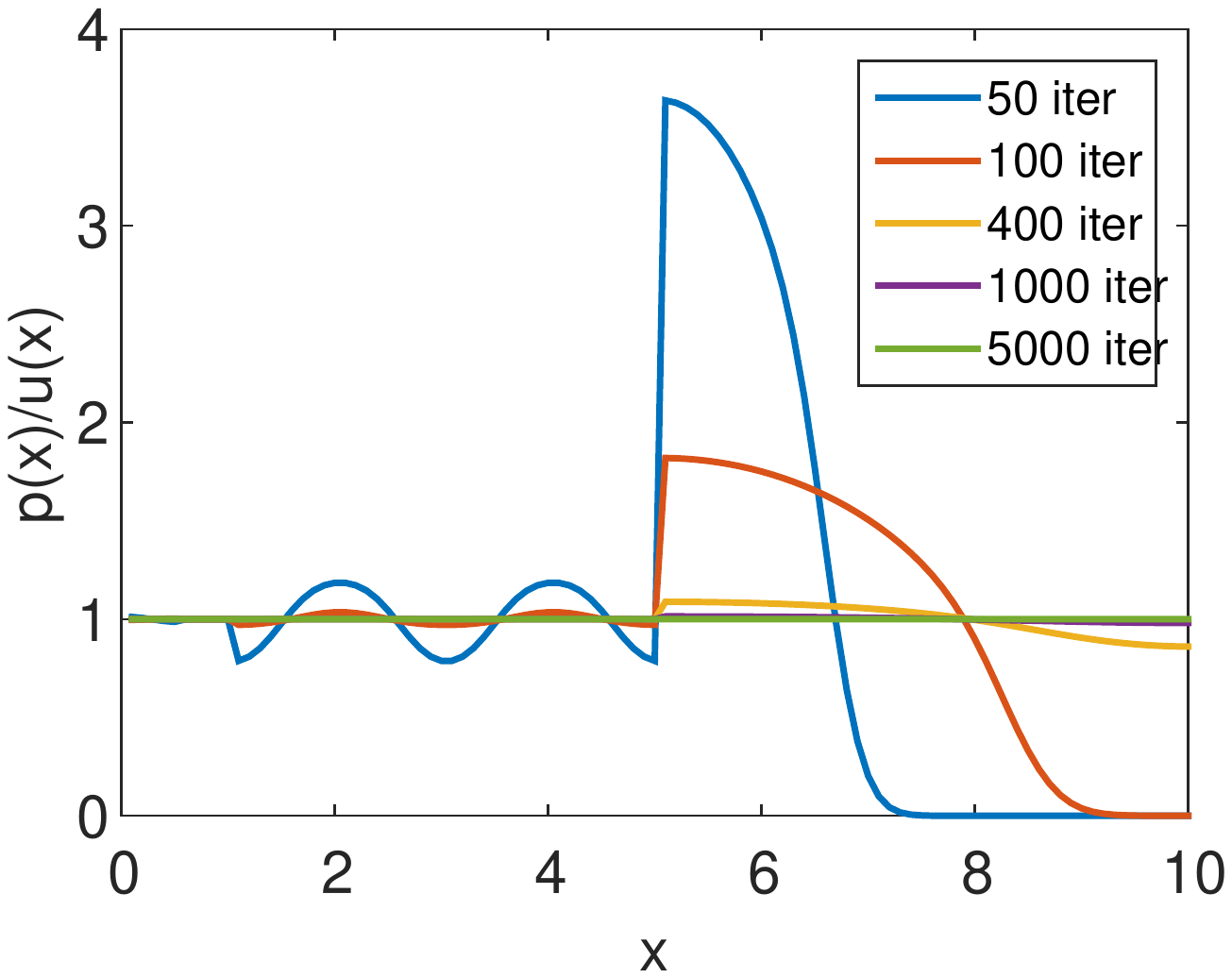}\\
 Ratio $\frac{p(x)}{u(x)}$ for different number of iterations\\
\end{tabular}
\caption{Numerical implementation of a 1D TV eigenfunction, $\lambda=1$,
Chambolle-Pock scheme \cite{Chambolle-Pock2011}. The ratio $\frac{p(x)}{u(x)}=\lambda$ should converge to 1.
It takes about 5,000 iterations for the numerical scheme to fully convergence. Note that
the error is data-dependent, where larger flat regions converge much slower.
}
\label{fig:ef_numeric}
\end{center}
\end{figure}

\section{Numerical Implementation Issues}
Here we give the gradient flow approximation.
We would like to approximate $ u_t = -p(u) $.
We use the Moreau-Yosida approximation \cite{Moreau1965} for gradient flow. The implicit discrete evolution (which is unconditionally stable in $dt$) is
$$ u(n+1) = u(n) - dtp(u(n+1)).$$
We can write the above expression as
$$ u(n+1) - u(n) + dtp(u(n+1)) = 0,$$
and see it coincides with the Euler-Lagrange of the following minimization:
$$ E(u,u(n)) = J(u) + \frac{1}{2dt}\|u - u(n)\|_{l^2}^2, $$
where $u(n+1)$ is the minimizer $u$ of $E(u,u(n))$ and $u(n)$ is fixed.
We now have a standard convex variational problem which can be solved using various algorithms (e.g. \cite{darbon.06.jmiv,Chambolle[1],SplitBregman2009,Pock_min_mumford_shah_ICCV2009,Chambolle-Pock2011}).

To approximate the second time derivative $u_{tt}(t)$ we store in memory 3 consecutive time steps of $u$ and use the standard central scheme:
$$ D^2 u(n) = \frac{u(n-1)+u(n+1)-2u(n)}{(dt)^2}, $$
with $n=1,2,..$, $u(0)=f$, $D^2 u(0)=0$. The time $t$ is discretized as $t(n) = ndt$. Therefore:
\begin{equation}
\label{eq:phi_n}
\phi(n) = D^2 u(n) t(n) = \frac{n}{dt}\left(u(n-1)+u(n+1)-2u(n)\right),
\end{equation}
and for instance the spectrum $S_1$ is
\begin{equation}
\label{eq:S_n}
S_1(n) = \sum_{i \in \mathcal{N}}  |\phi_i(n)|,
\end{equation}
where $i$ is a pixel in the image and $\mathcal{N}$ is the image domain.

In Fig. \ref{fig:ef_numeric} we show that $u$ is a numerical eigenfunction where $p$ converges to $\lambda u$.
In this case we compute $p$ by the variational model above (with a small $dt$) using the scheme of \cite{Chambolle-Pock2011}.
It is shown that such schemes are far better for oscillating signals (works well for denoising), but when there are large
flat regions the local magnitude of $p$ depends on the support of that region and convergence is considerably slower
(can be orders of magnitude). Thus alternative ways are desired to solve such cases more efficiently.

\begin{table*}[htbp]
\begin{center}
\begin{tabular}{|p{1.9in}||p{1.0in}|p{1.0in}|}
\hline
 & {\bf Wavelength } & {\bf Frequency }\\
\hline \hline
{\bf Gradient flow} & $\phi(t) = t \p_{tt} u(t) $ & $\psi(s) = \frac{1}{s^2}\phi(\frac{1}{s})$ \\
$\p_t u(t) \in -\p J(u(t)),$ $u|_{t=0}=f$. & & \\ \hline
{\bf Variational minimization} & $\phi(t) = t \p_{tt} u(t) $ & $\psi(s) = \frac{1}{s^2}\phi(\frac{1}{s})$ \\
$t J(u) + \frac{1}{2}\|f-u\|_2^2$. & & \\ \hline
{\bf Inverse-scale-space} & $\phi(t) = \frac{1}{t^2}\psi(\frac{1}{t})$ & $\psi(s)=\p_s u(s)$ \\
$\p_s p(s) = f-u(s),$ $p(s) \in \p J(u(s))$. & & \\
\hline
\end{tabular}
\caption{ \label{tab:rep}
Six spectral representations of one-homogeneous functionals.}
\end{center}
\end{table*}

\section{Extensions to other nonlinear decomposition techniques}
As presented in
\cite{spec_one_homog15} the general framework of nonlinear spectral decompositions via one-homogeneous regularization functionals does not have to be based on the gradient flow equation \eqref{eq:scaleSpace}, but may as well be defined via a variational method of the form
\begin{align}
\label{eq:vm}
 u(t) = \arg \min_u \frac{1}{2}\|u-f\|_2^2 + t J(u),
\end{align}
or an inverse scale space flow of the form
\begin{align}
\label{eq:iss}
 \partial_s p(s) = f - u(s), \qquad p(s) \in \partial J(u(s)).
\end{align}
Since the variational method shows exactly the same behavior as the scale space flow for the input data $f$ being an eigenvector of $J$, the definitions of the spectral decomposition coincides with the one of the scale the flow. The behavior of the inverse scale space flow on the other hand is different in two respects. Firstly, it start at $u(0)$ being the projection of $f$ onto the kernel of $J$ and converges to $u(s)=f$ for sufficiently large $s$. Secondly, the dynamics for $f$ being an eigenfunction of $J$ is piecewise constant in $u(s)$ such that only a single derivative is needed to obtain a peak in the spectral decomposition. The behavior on eigenfunctions furthermore yields relation of $s = \frac{1}{t}$ when comparing the spectral representation obtained by the variational or scale space method with the one obtained by the gradient flow.

In analogy to the linear spectral analysis it is natural to call the spectral representation of the inverse scale space flow a \textit{frequency representation} opposed to the \textit{wavelength representation} of the variational and scale space methods.

To be able to obtain frequency and wavelength representations for all three types of methods, we make the convention that a filter $H(t)$ acting on the wavelength representation $\phi$ should be equivalent to the filter $H(1/t)$ acting the frequency representation $\psi$, i.e.
$$ \int_0^\infty \phi(t) H(t) ~dt = \int_0^\infty \psi(t) H(1/t) ~dt. $$
By a change of variables one deduces that
$$ \psi(t) = \frac{1}{t^2}\phi(\frac{1}{t}) \text{ respectively }  \phi(t) = \frac{1}{t^2}\psi(\frac{1}{t})$$
are the appropriate conversion formulas to switch from the frequency to the wavelength representation and vice versa. Table \ref{tab:rep} gives an overview over the three nonlinear decomposition methods and their spectral representation in terms of frequencies and wavelength.

It can be verified that in particular cases such as regularizations of the form $J(u) = \|Vu\|_1$ for an orthonormal matrix $V$, or for the data $f$ being an eigenfunction of $J$, the gradient flow, the variational method and the inverse scale space flow yield exactly the same spectral decompositions $\phi$ and $\psi$. In \cite{BEGM_1hom_SIAM_submitted} we are investigating more general equivalence results and we refer the reader to \cite{Eckardt_Burger_Bach_thesis_2014} for a numerical comparison of the above approaches. A precise theory of the differences of the three approaches for a general one-homogeneous $J$, however, remains an open question.

\section{Preliminary applications}
\subsection{Filter design for denoising}
{One particular application of the spectral decomposition framework could be the design of filters in denoising applications. It was shown in \cite[Theorem 2.5]{Gilboa_spectv_SIAM_2014} that the popular denoising strategy of evolving the scale space flow \eqref{eq:scaleSpace} for some time $t_1$, is equivalent to the particular filter
$$ H(t) = \left\{\begin{array}{ll}
0 & \text{ for } 0 \leq t \leq t_1 \\
\frac{t-t_1}{t} & \text{ for } t_1 \leq t \leq \infty
\end{array} \right. $$
in the framework of equation \eqref{eq:tv_filt}. The latter naturally raises the question if this particular choice of filter is optimal for practical applications. While in case of a perfect separation of signal and noise an ideal low pass filter \eqref{eq:lpf} may allow a perfect reconstruction (as illustrated on synthetic data in figure \ref{fig:idealLowPass}), the spectral decomposition of natural noisy images will often contain noise as well as texture in high frequency components.
\begin{figure*}
\includegraphics[width=0.16\textwidth]{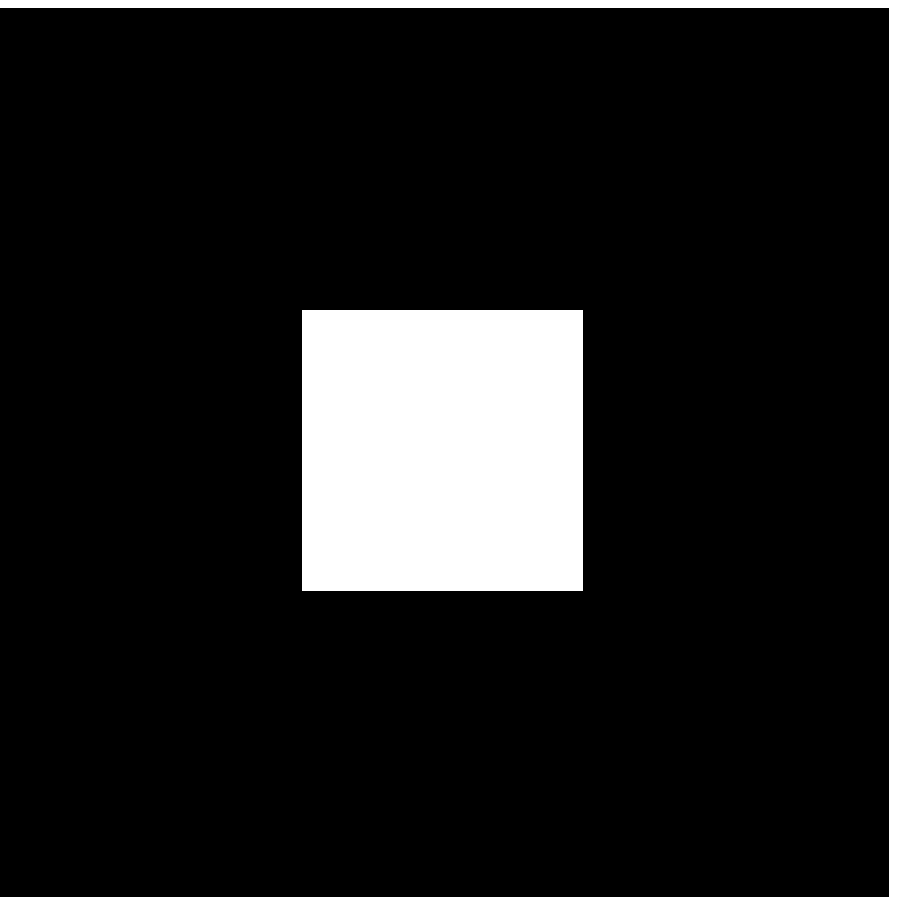}
\includegraphics[width=0.21\textwidth]{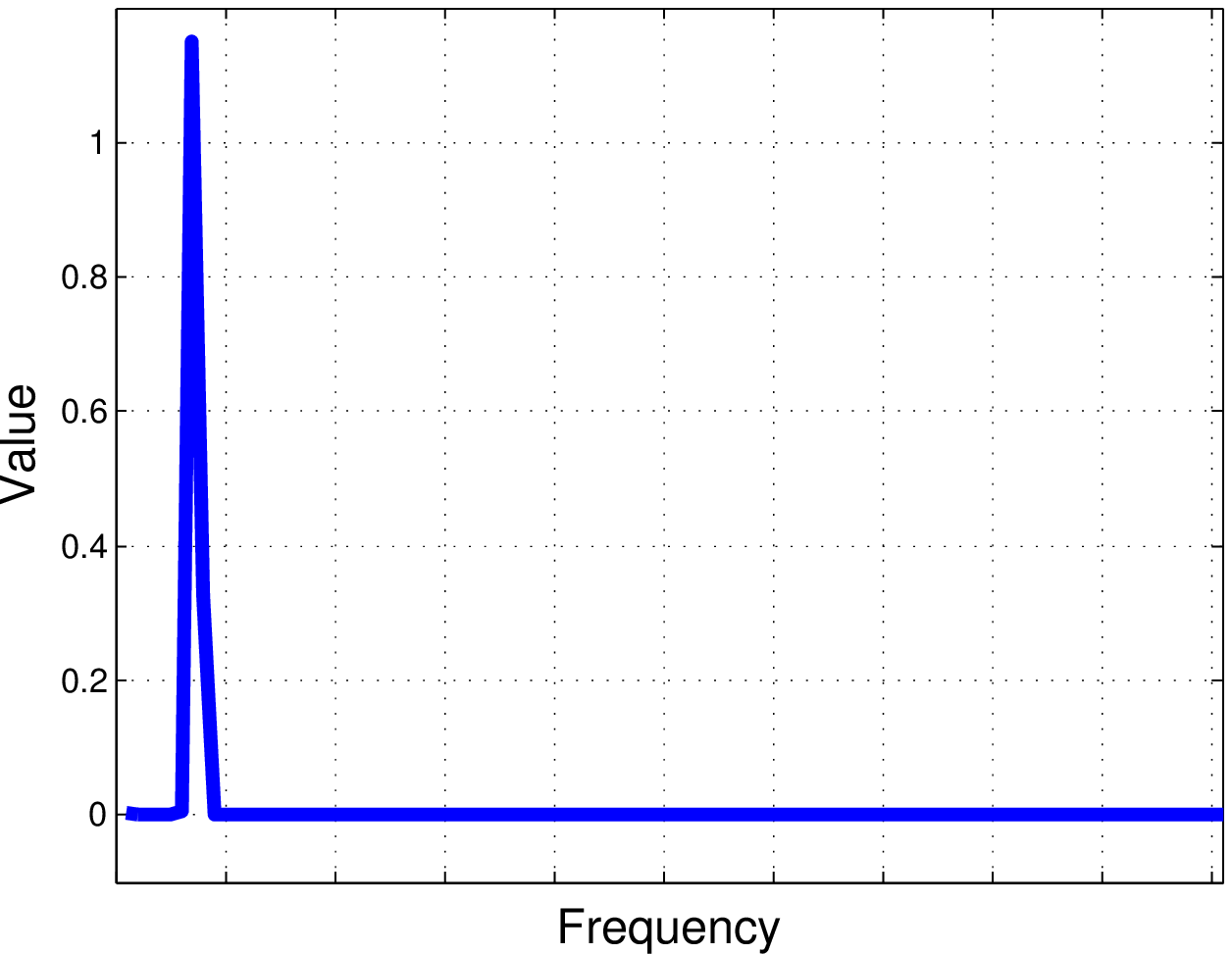}
\includegraphics[width=0.16\textwidth]{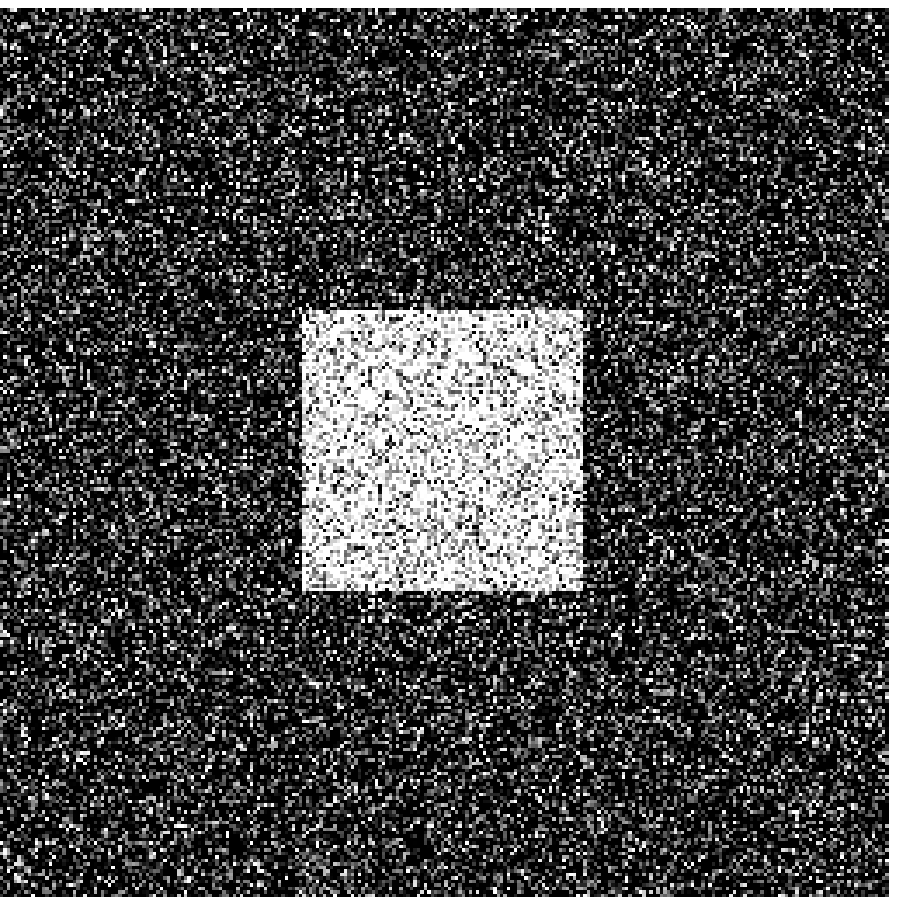}
\includegraphics[width=0.21\textwidth]{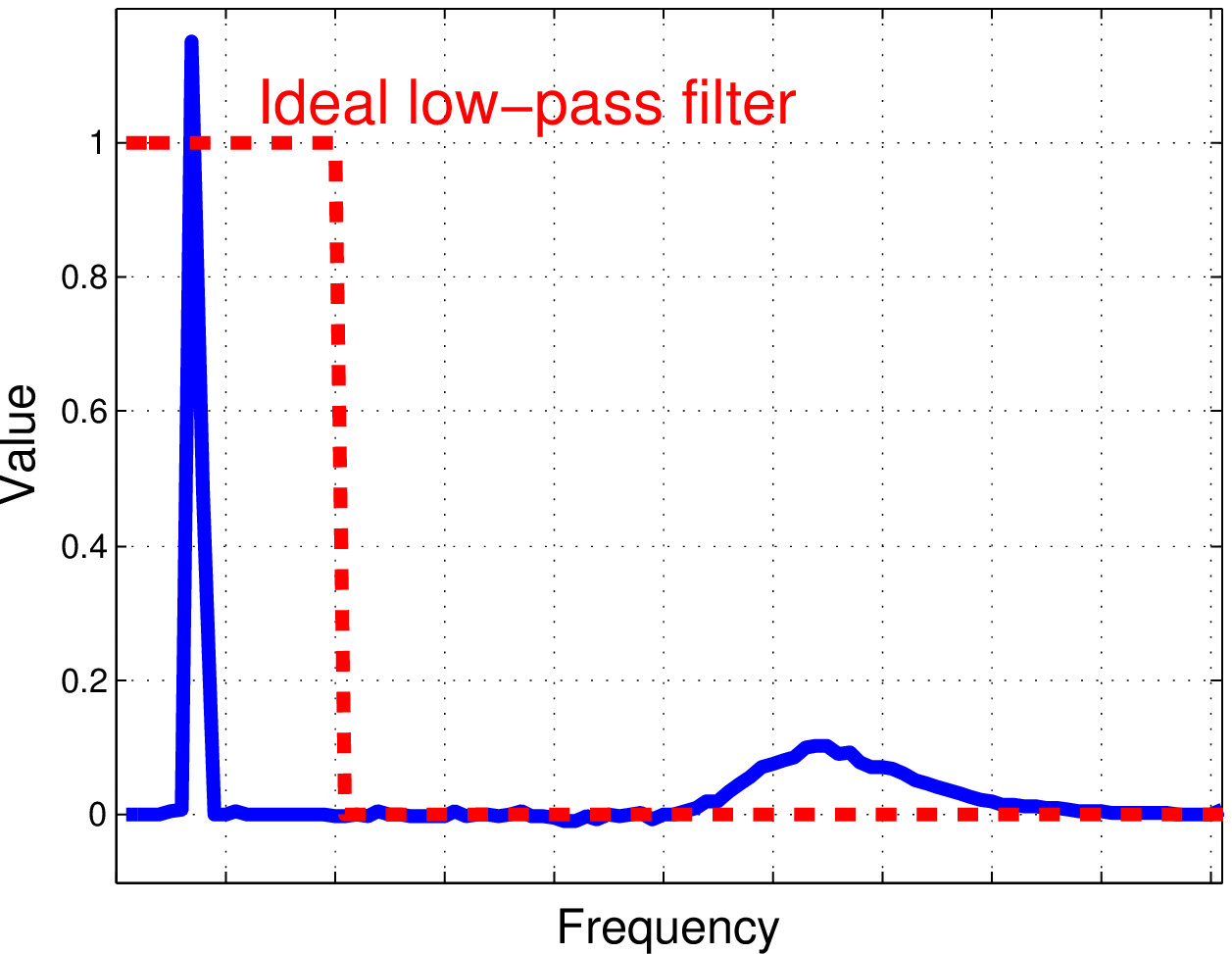}
\includegraphics[width=0.16\textwidth]{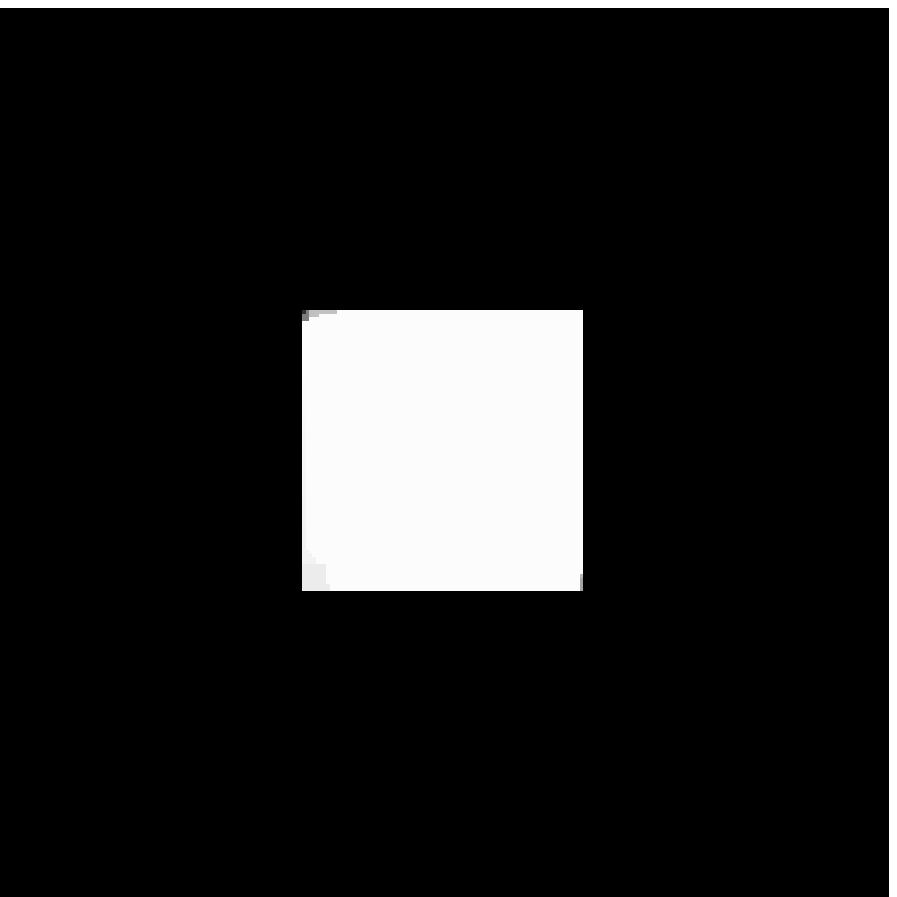}
  \caption{Example for perfect separation of noise and signal via anisotropic TV regularization in the framework of nonlinear spectral decompositions using the inverse scale space flow. From left to right: Clean image, corresponding spectrum of the clean image, noisy image, spectrum of the noisy image with an ideal low pass filter to separate noise and signal, reconstructed image after applying the ideal low pass filter. We used the definition $S_3$ in this illustration of the spectrum.}
  \label{fig:idealLowPass}
\end{figure*}

In \cite{learnedSpectralFilters} a first approach to learning optimal denoising filters on a training data set of natural images demonstrated promising results. In particular, it was shown that optimal filters neither had the shape of ideal low pass filters nor of the filter arising from evolving the gradient flow.

In future research different regularizations for separating noise and signals in a spectral framework will be investigated. Moreover, the proposed spectral framework allows to filter more general types of noise, i.e. filters are not limited to considering only the highest frequencies as noise. Finally, the complete absence of high frequency components may lead to an unnatural appearance of the images, such that the inclusion of some (damped) high frequency components may improve the visual quality despite possibly containing noise. Figure \ref{fig:highFrequenInclusion} supports this conjecture by a preliminary example.
\begin{figure*}
\begin{center}
\includegraphics[width=0.3\textwidth]{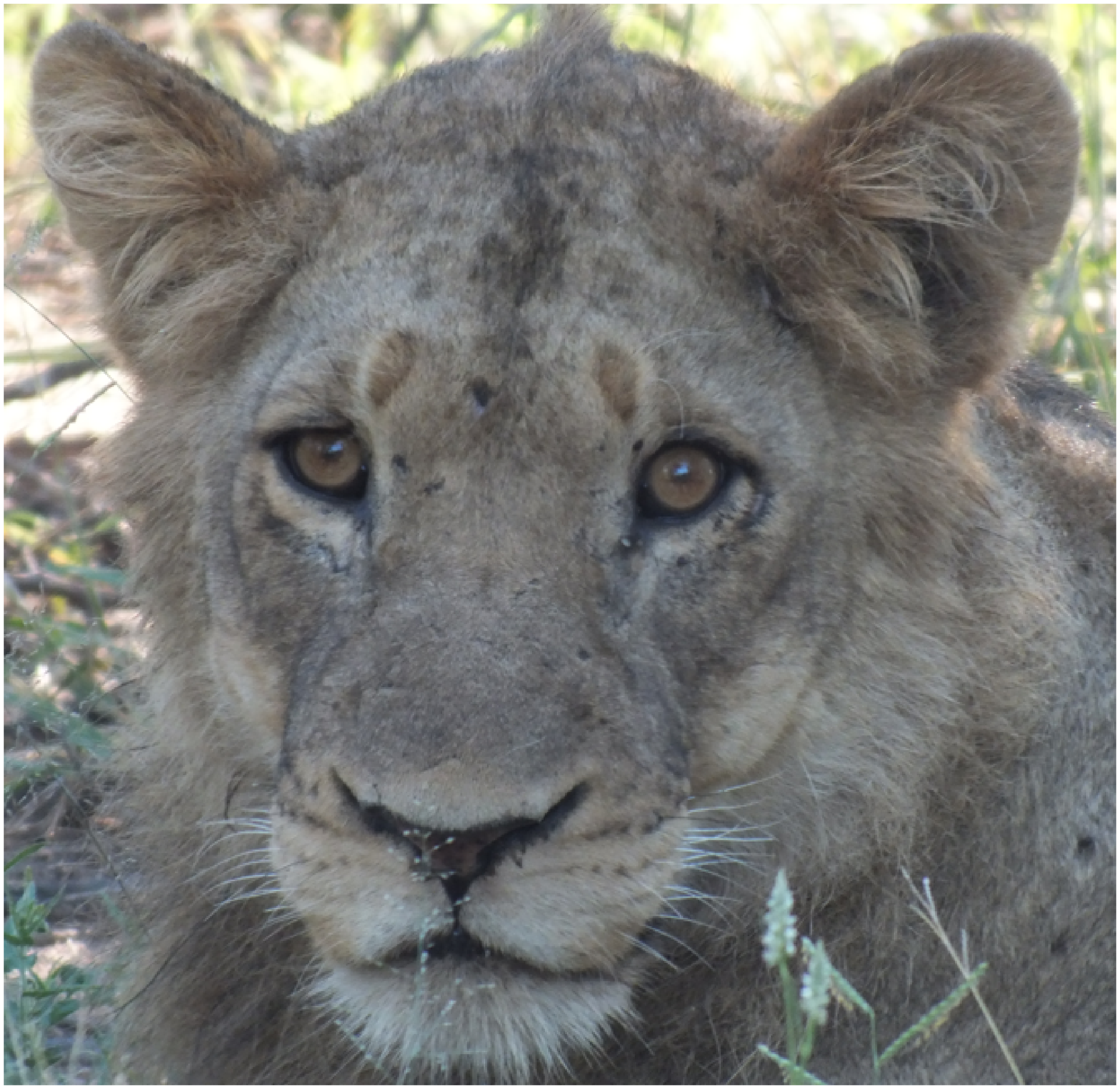} \hspace{0.3cm}
\includegraphics[width=0.3\textwidth]{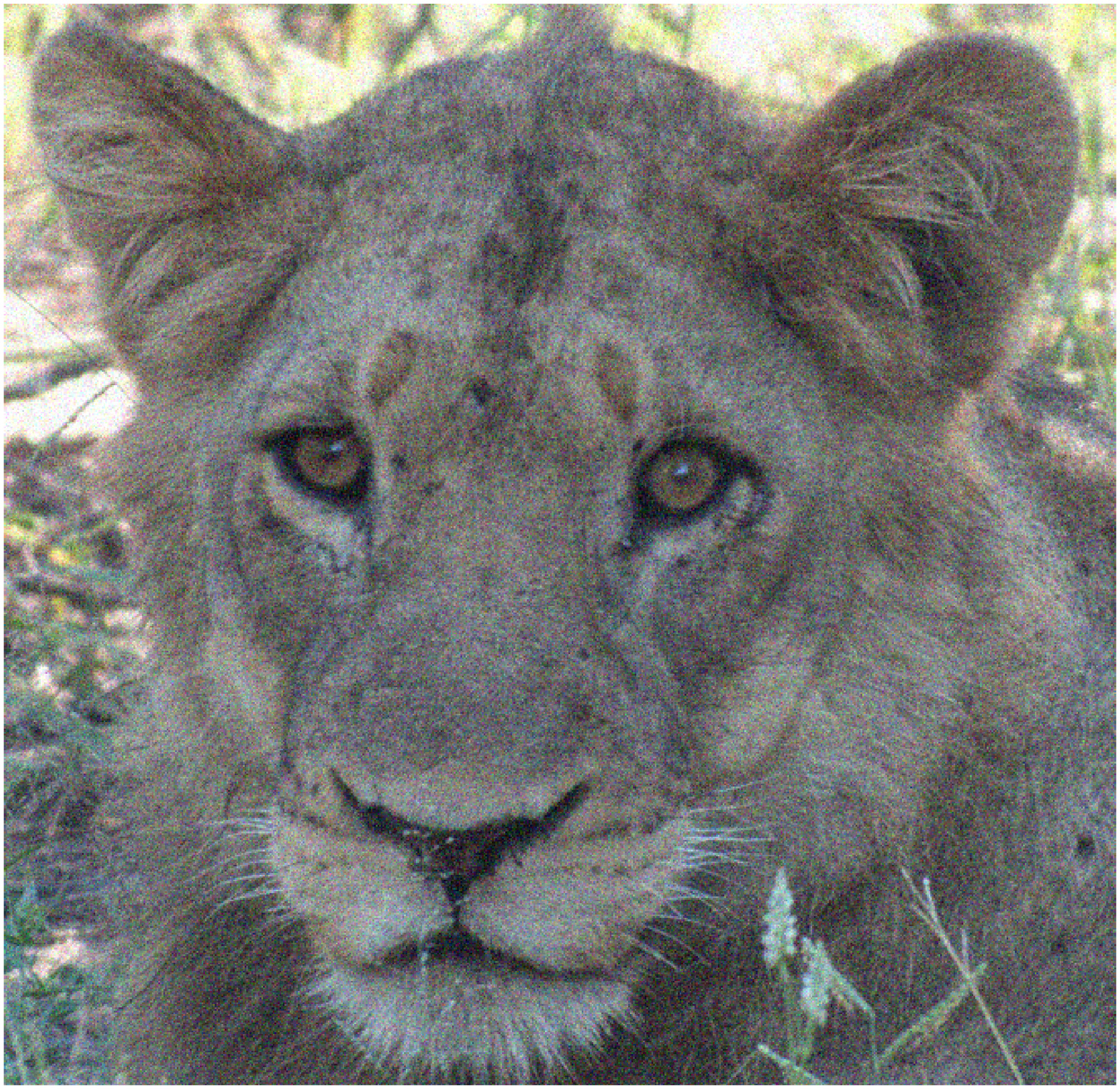}\\
\vspace{0.3cm}
\includegraphics[width=0.3\textwidth]{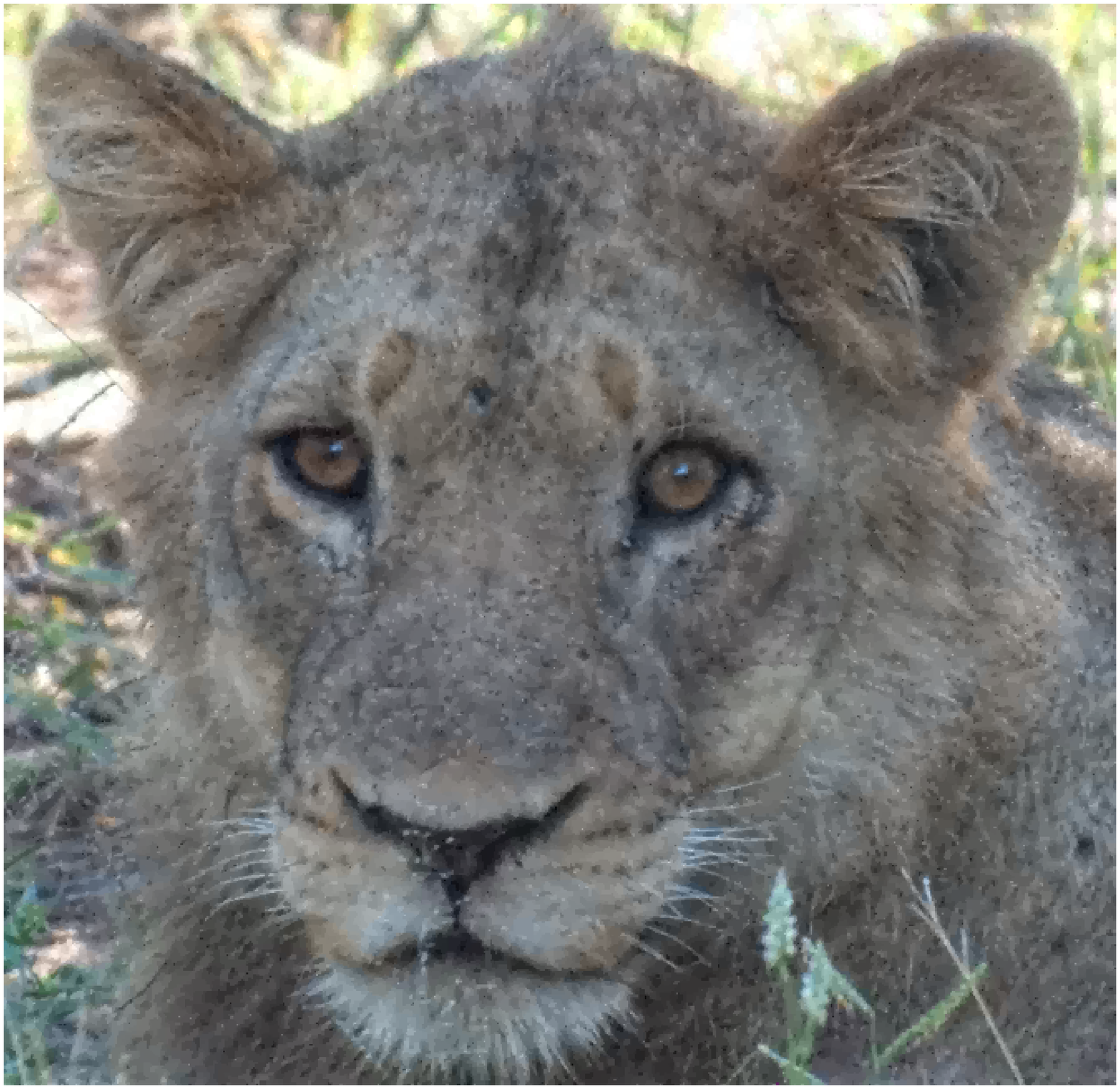}
\includegraphics[width=0.3\textwidth]{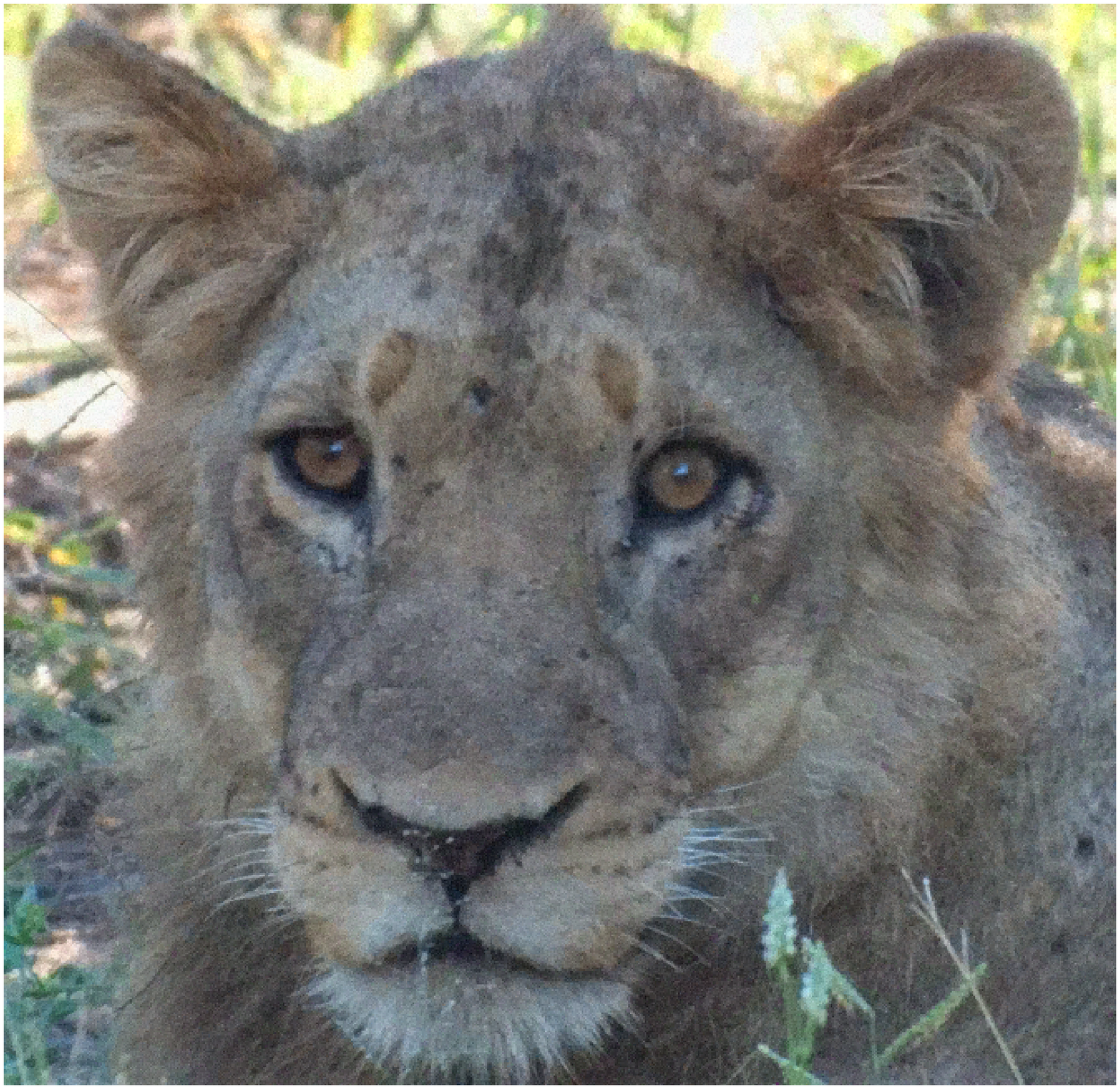}
\includegraphics[width=0.32\textwidth]{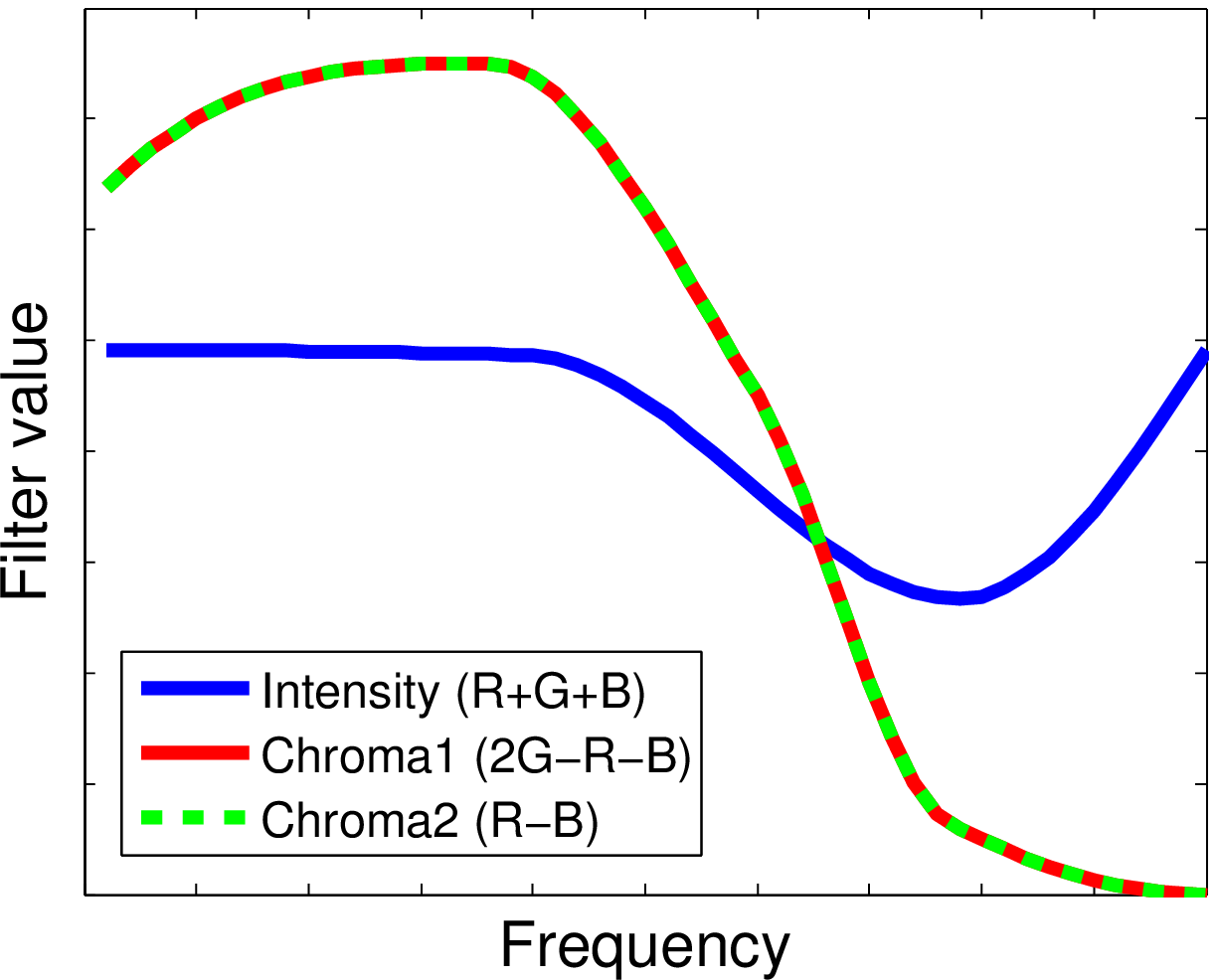}
\end{center}
  \caption{Example for denoising natural images: The original image (upper left) contains a lot of texture. If one tries to denoise the upper right image with an ideal low pass filter (using TV in a color space that decouples intensities and chromaticities), we obtain the lower left image. Since some of the high frequencies are completely suppressed, the image looks unnatural. A designed filter as shown in the lower right can produce more visually pleasing results (lower middle image). This example was produced using the ISS framework such that the filters are shown in frequency representation. Note that the suppression of color artifacts seems to be significantly more important than the suppression of oscillations.}
  \label{fig:highFrequenInclusion}
\end{figure*}

\subsection{Texture processing}
One can view the spectral representation as an extension to infinite dimensions of multiscale approaches for texture decomposition, such as \cite{Tadmor_vese,gilles}. In this sense $\phi(t)$ of \eqref{eq:phi} 
is an infinitesimal textural element (which goes in a continuous manner from ``texture'' to ``structure'', depending on $t$).
A rather straightforward procedure is therefore to analyze the spectrum of an image and either manually or automatically select
integration bands that correspond to meaningful textural parts in the image. This was done in \cite{Horesh_Gilboa_orientation15},
where a multiscale orientation descriptor, based on Gabor filters, was constructed. This yields for each pixel a multi-valued orientation field, which
is more informative for analyzing complex textures. See Fig. \ref{fig:barbara2_tex} as an example of such a decomposition of the Barbara image.

\begin{figure*}[htb]
\begin{tabular}{ccc}
\subfloat[Input]{\includegraphics[width=30mm]{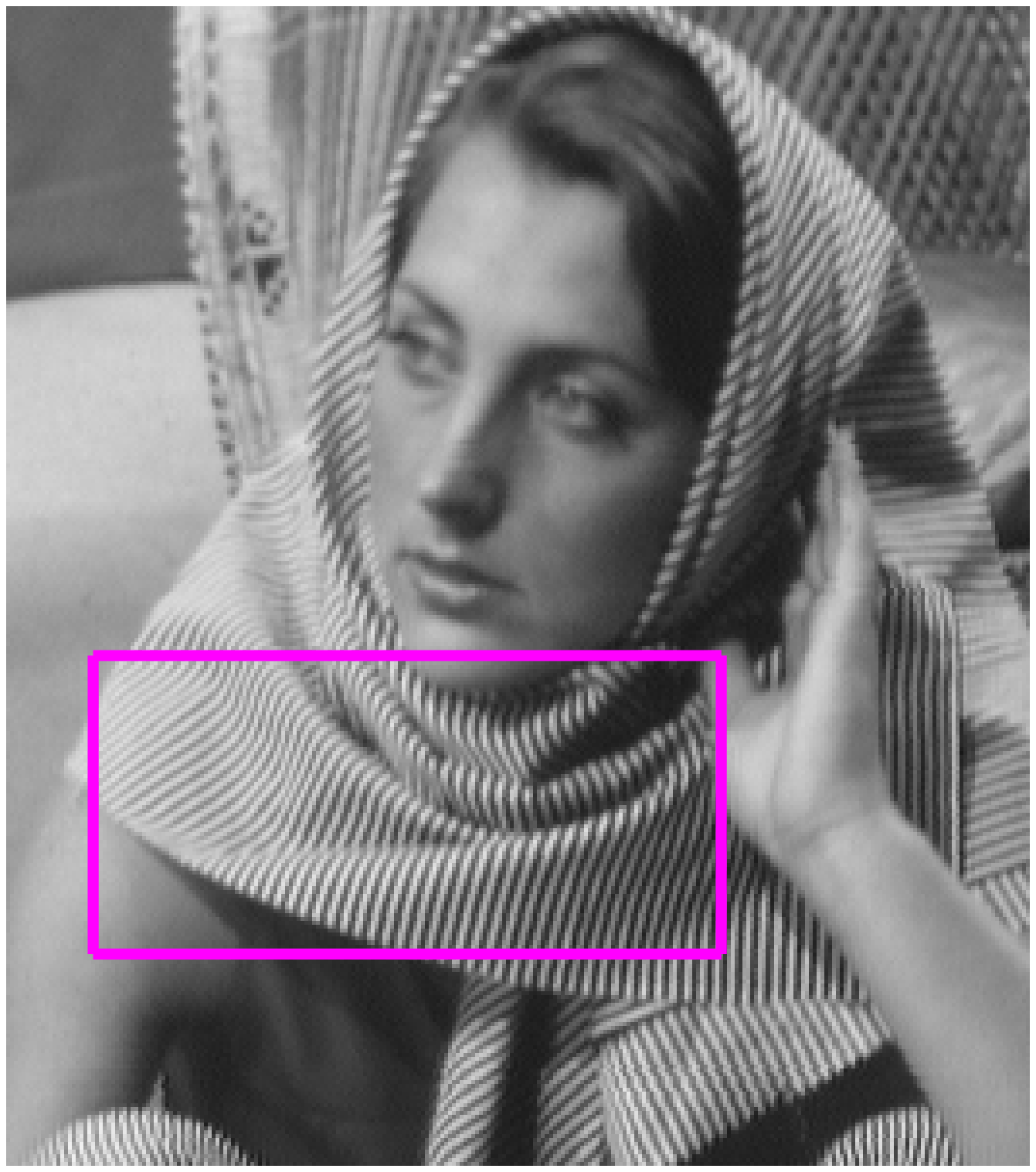}}&
\subfloat[Spectrum]{\includegraphics[width=50mm]{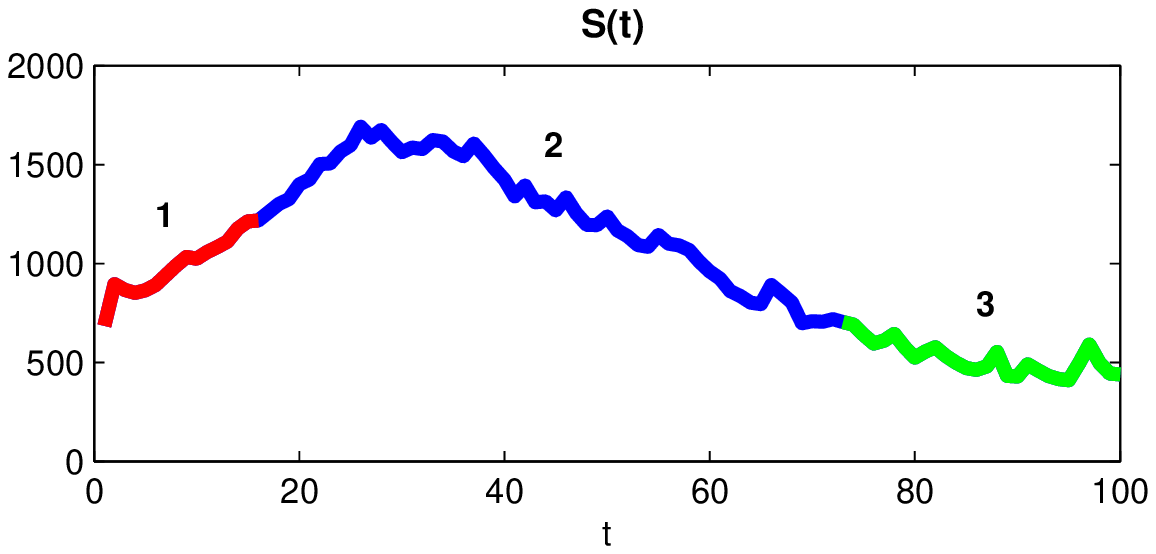}}\\
\subfloat[Scale 1]{\includegraphics[width=50mm]{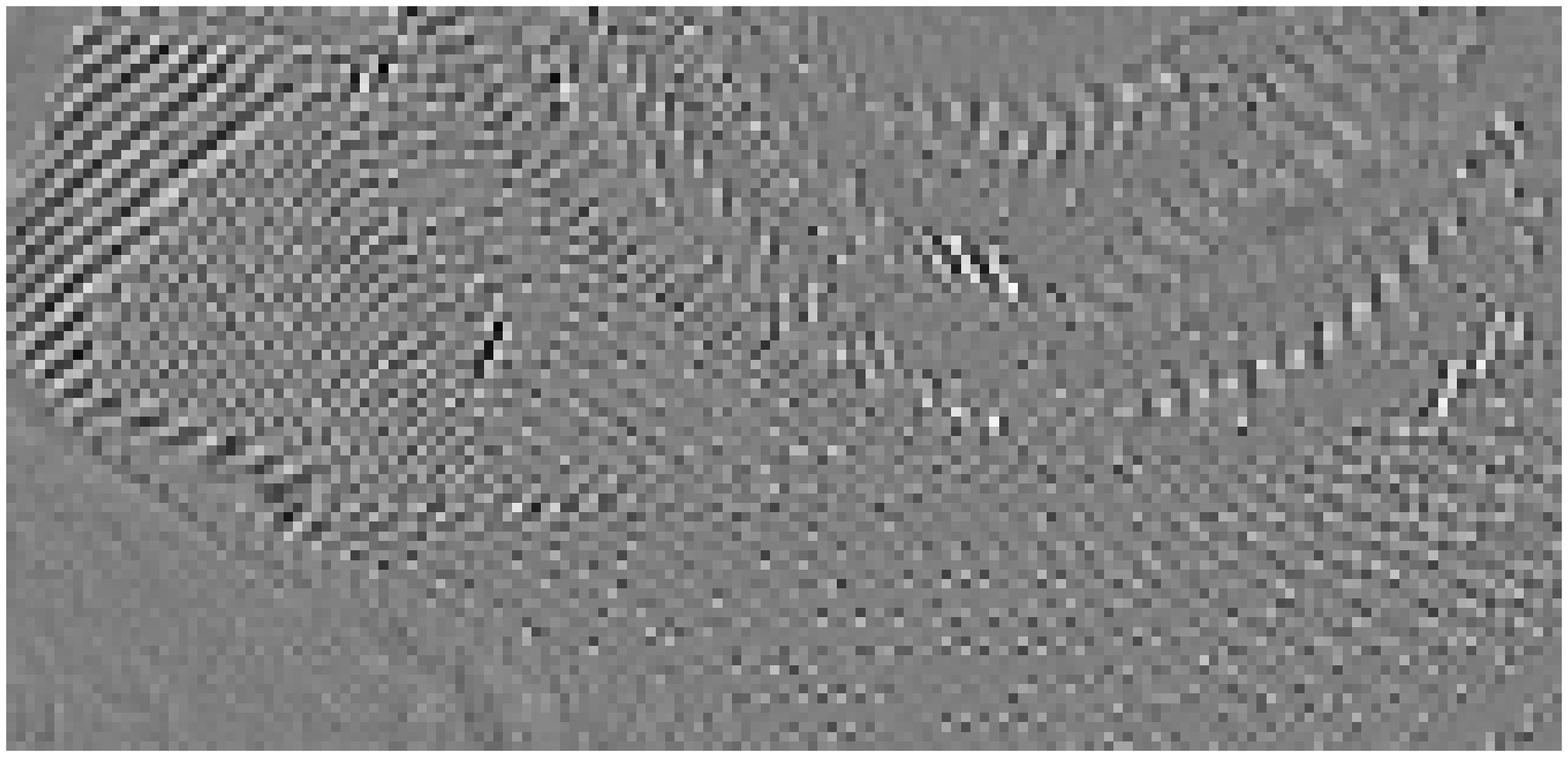}} &
\subfloat[Scale 2]{\includegraphics[width=50mm]{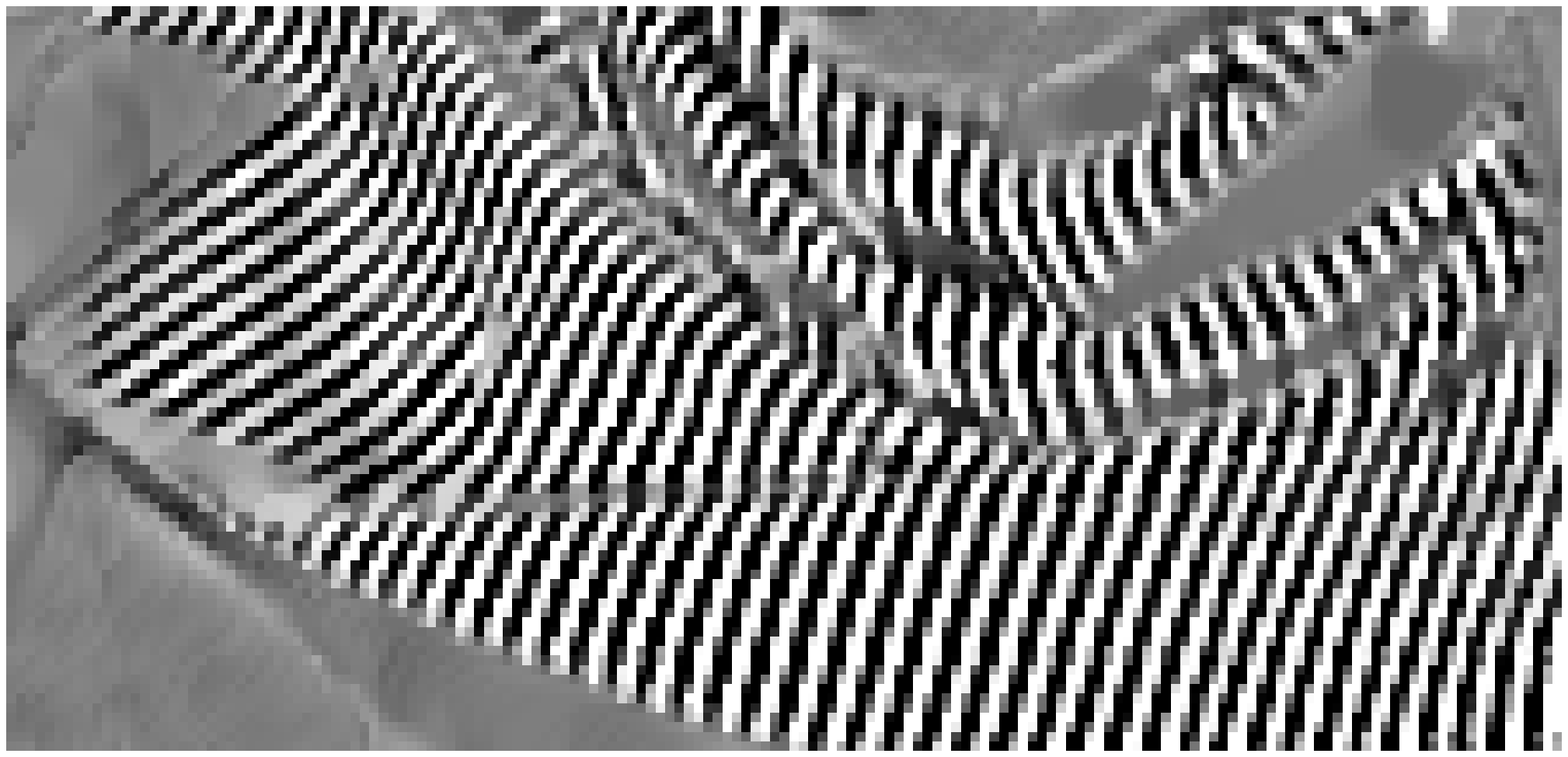}}  &
\subfloat[Scale 3]{\includegraphics[width=50mm]{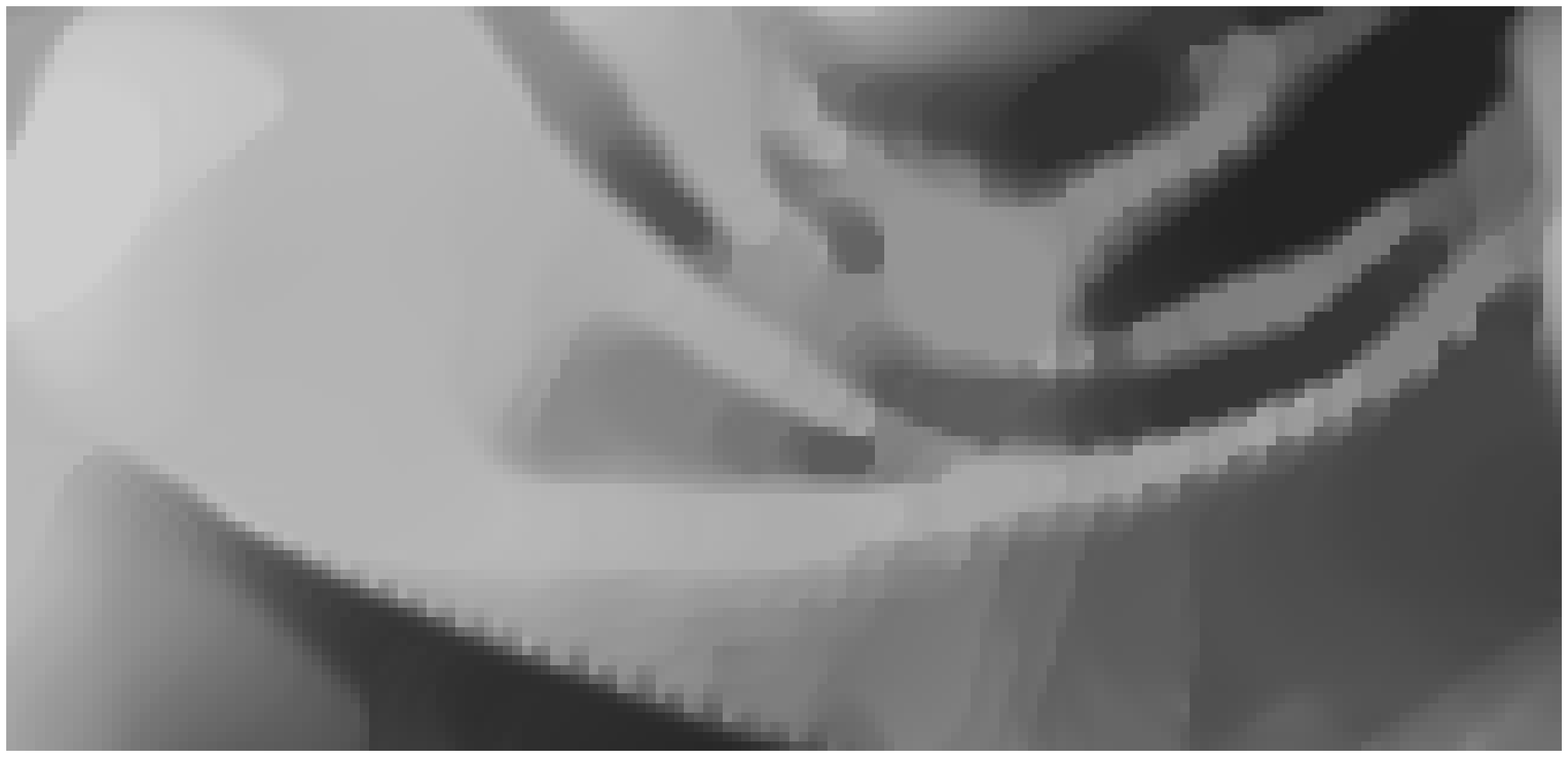}} \\
\subfloat[Orientation 1]{\includegraphics[width=50mm]{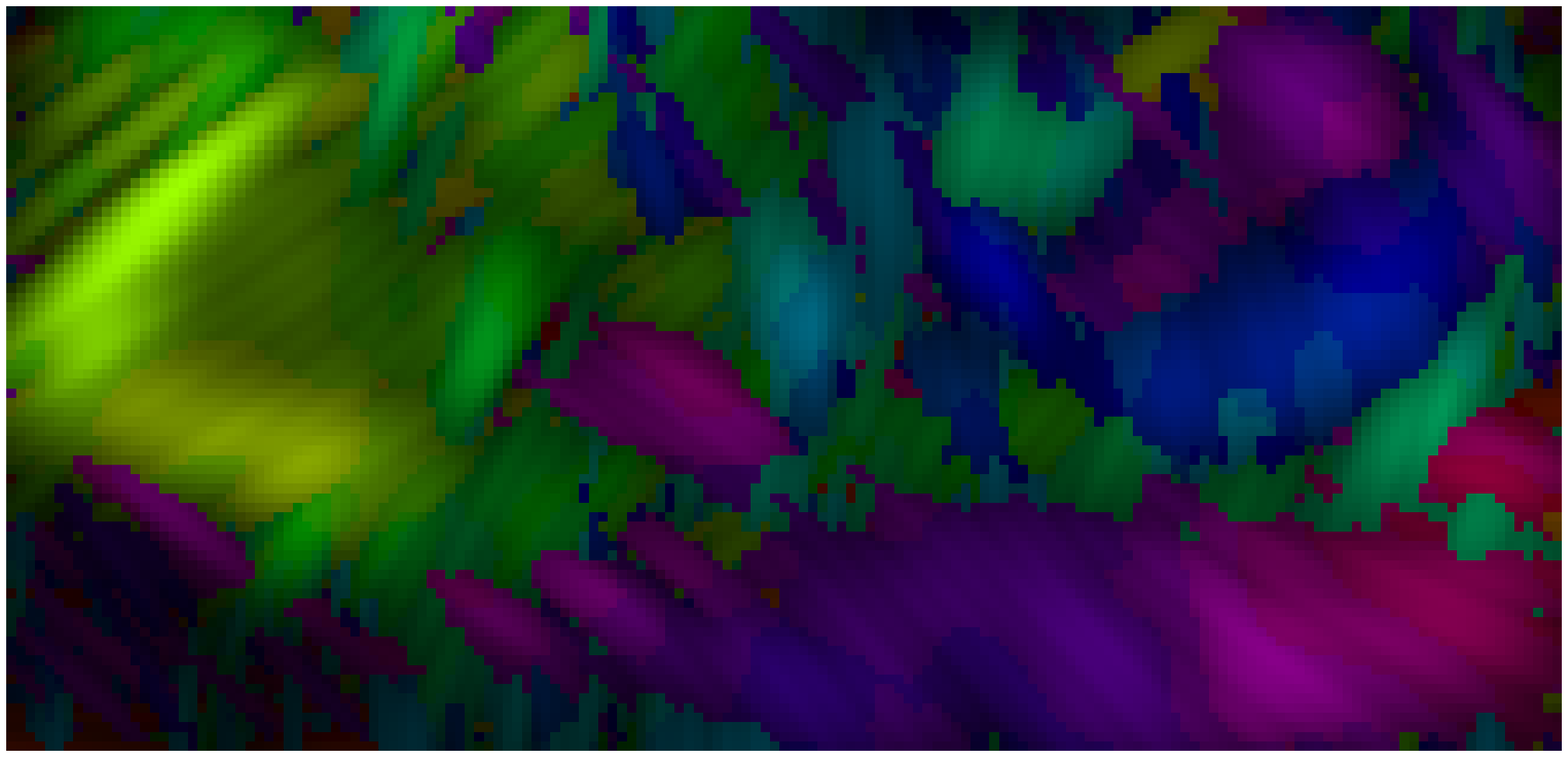}} &
\subfloat[Orientation 2]{\includegraphics[width=50mm]{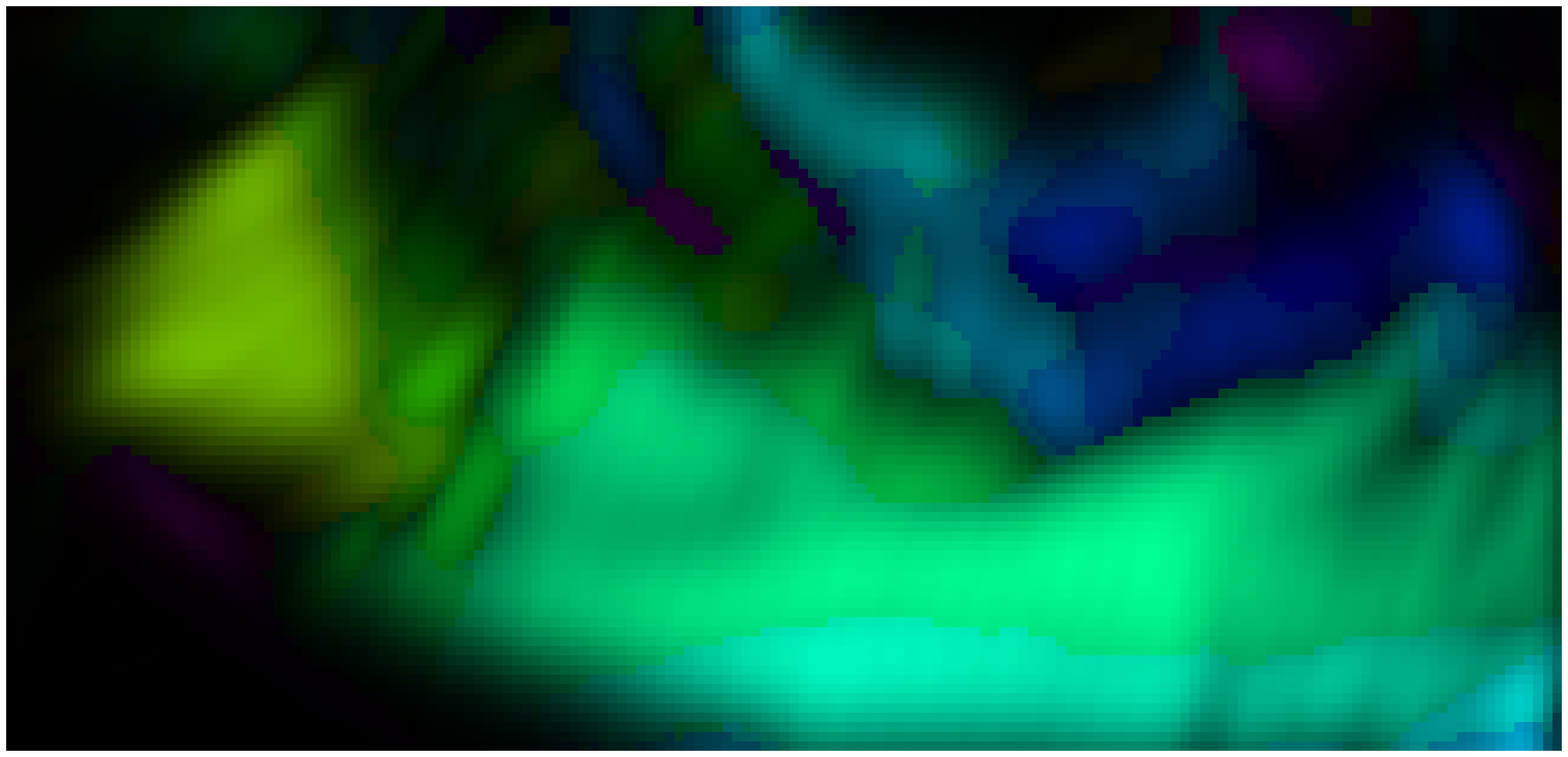}}  &
\subfloat[Orientation 3]{\includegraphics[width=50mm]{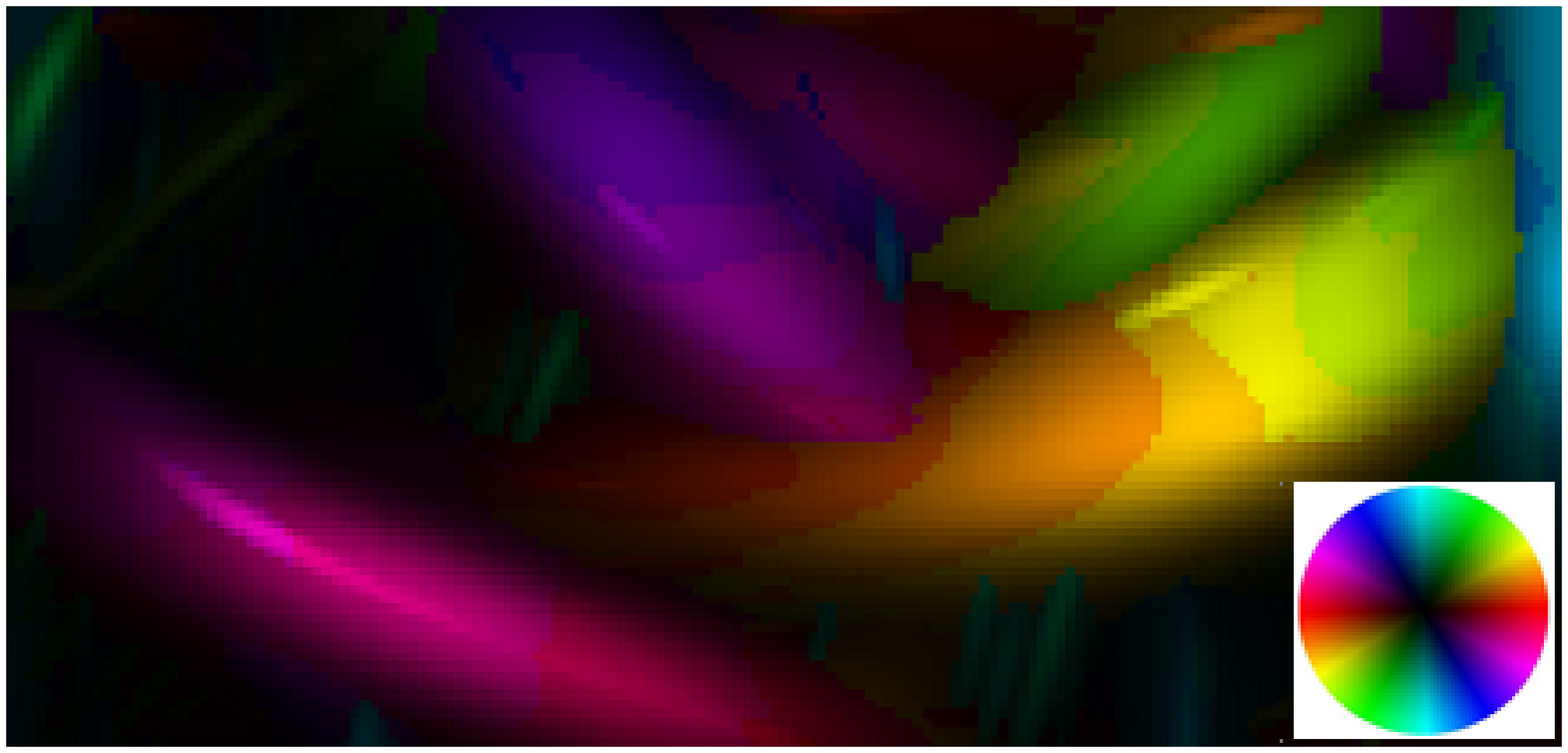}} \\
\end{tabular}
\caption {{Multiscale orientation analysis based on spectral decomposition.} (a) Barbara image, (b) TV spectrum of the image with separated scales marked in different colors. (c-e) Multiscale decomposition, (f-h) Corresponding orientation maps.}
\label{fig:barbara2_tex}
\end{figure*}

\begin{figure*}
\centering
\begin{tabular}{ccc}
\subfloat[Input image]{\includegraphics[width=40mm]{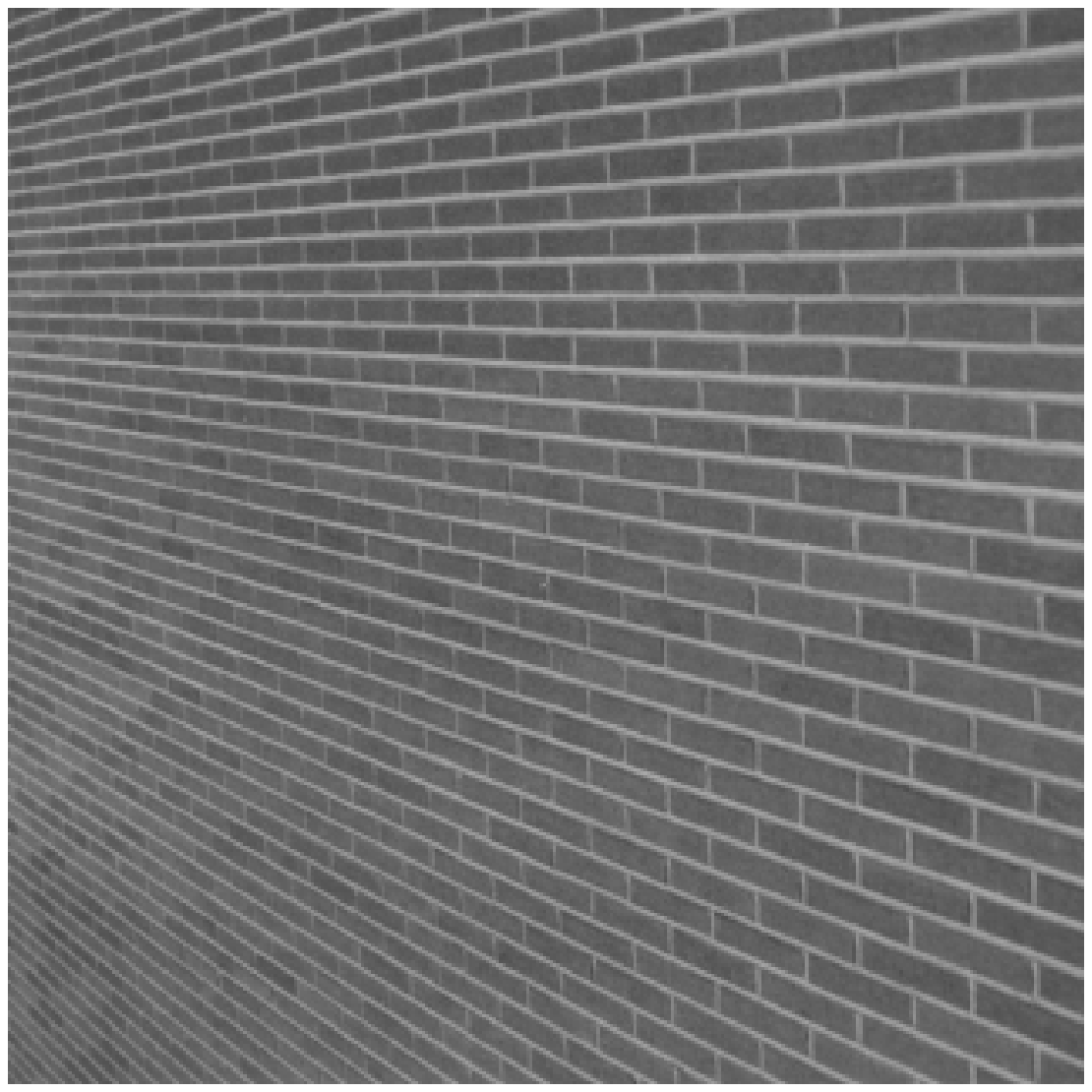}} &
\subfloat[Maximal $\phi$]{\includegraphics[width=60mm]{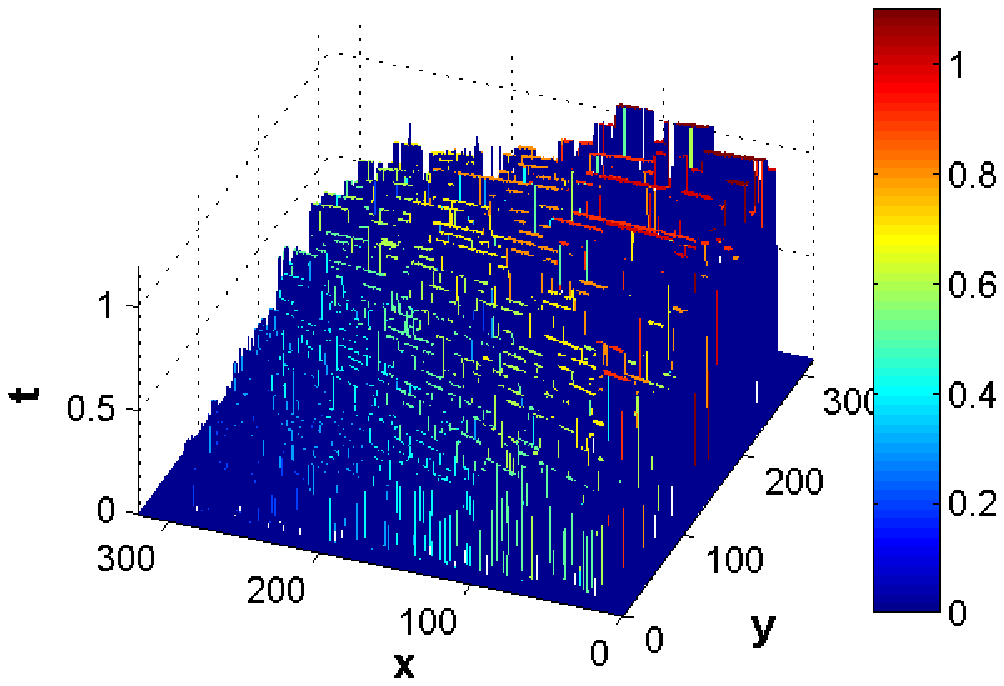}} &
\subfloat[Texture surface]{\includegraphics[width=60mm]{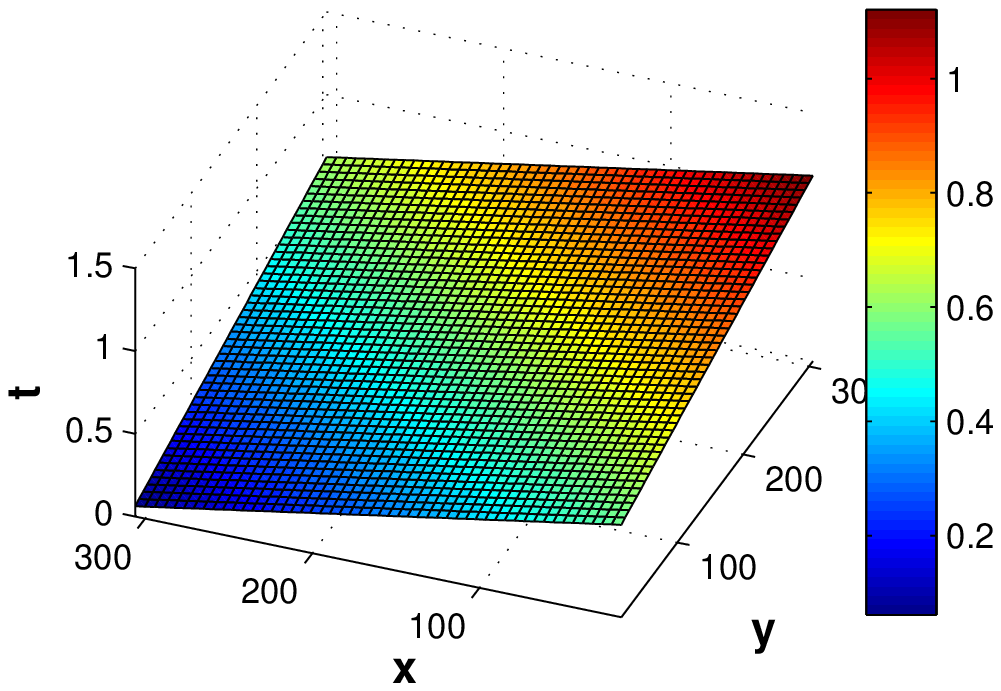}} \\
\end{tabular}
\caption{Spectral analysis of a wall image, from \cite{Horesh_thesis}. (a) Input image of a brick wall (b) $S(x)=\max_t \phi(t;x)$. (c) Approximation of $S(x)$ by a plain.}
\label{fig:wall_spec}
\end{figure*}

\begin{figure*}
\centering
\begin{tabular}{cccc}
\subfloat[Input image]{\includegraphics[width=40mm]{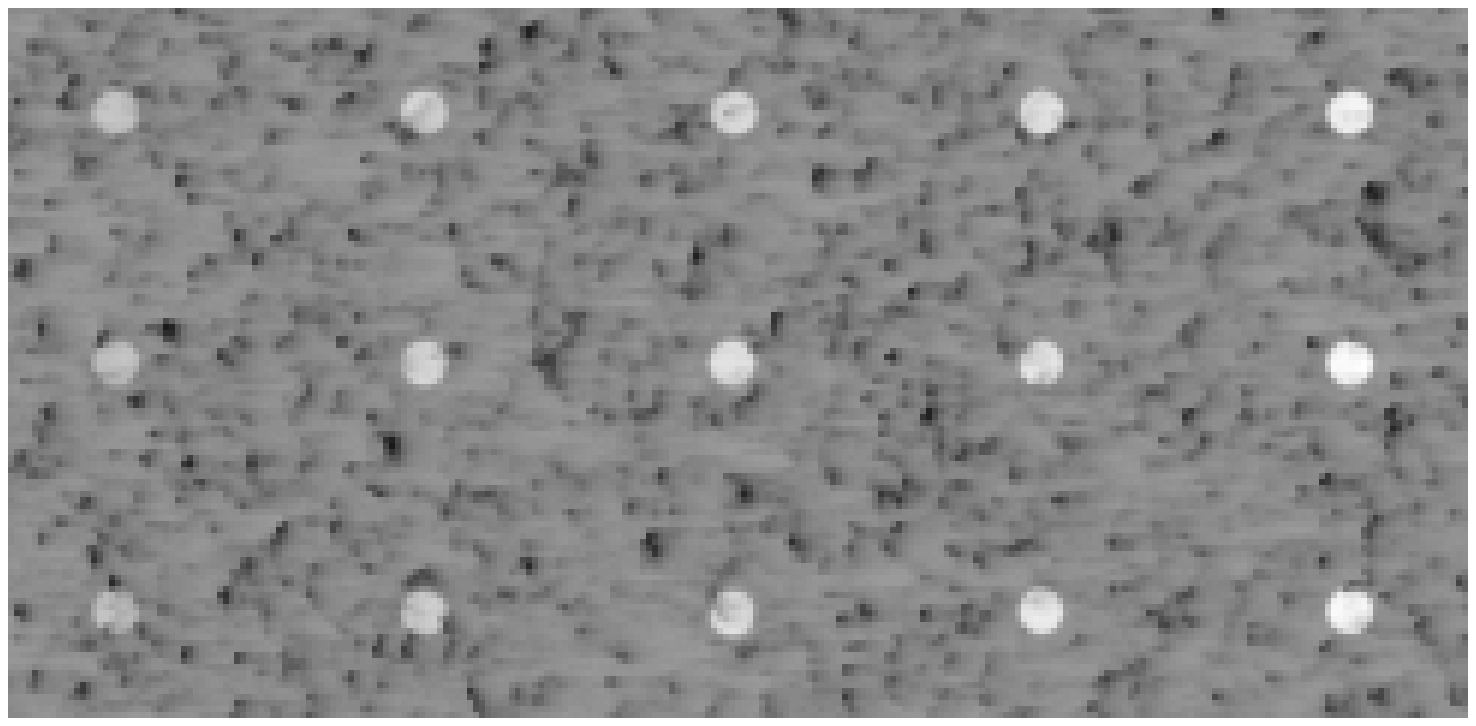}} &
\subfloat[Maximal $\phi$]{\includegraphics[width=40mm]{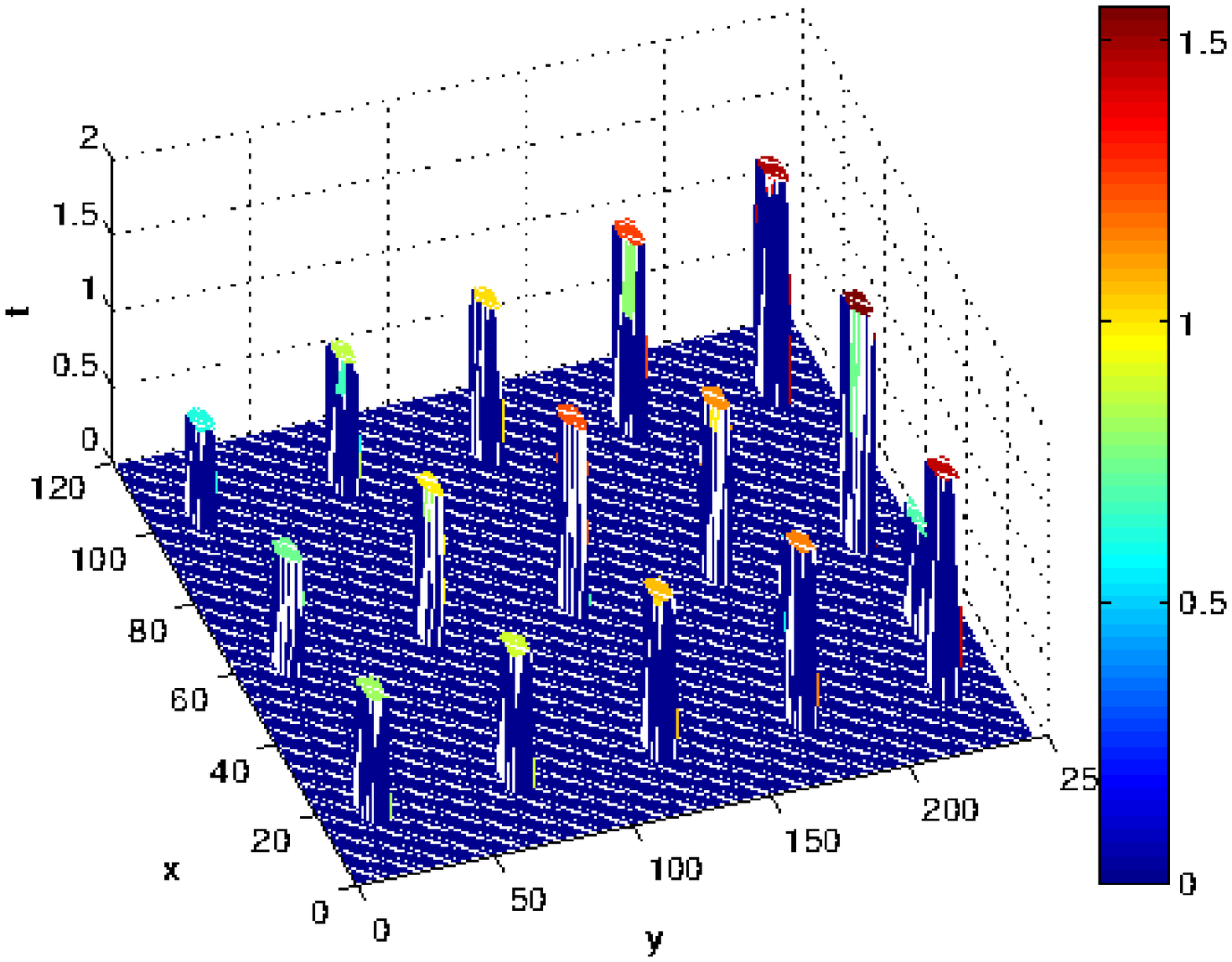}} &
\subfloat[Separation band]{\includegraphics[width=40mm]{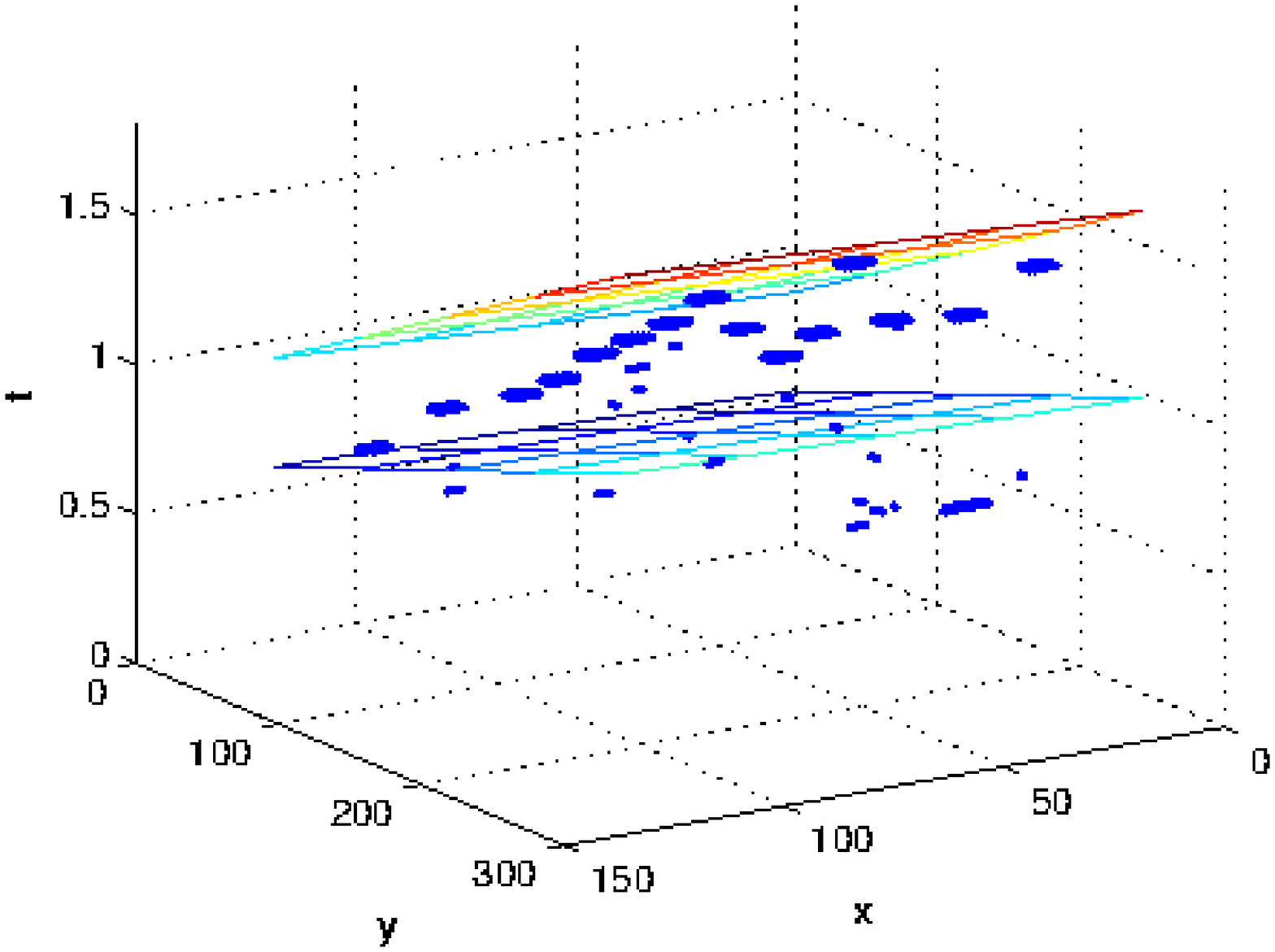}} \\
\subfloat[Spectral band separation \cite{HoreshGilboa_submitted}, layer 1 ]{\includegraphics[width=40mm]{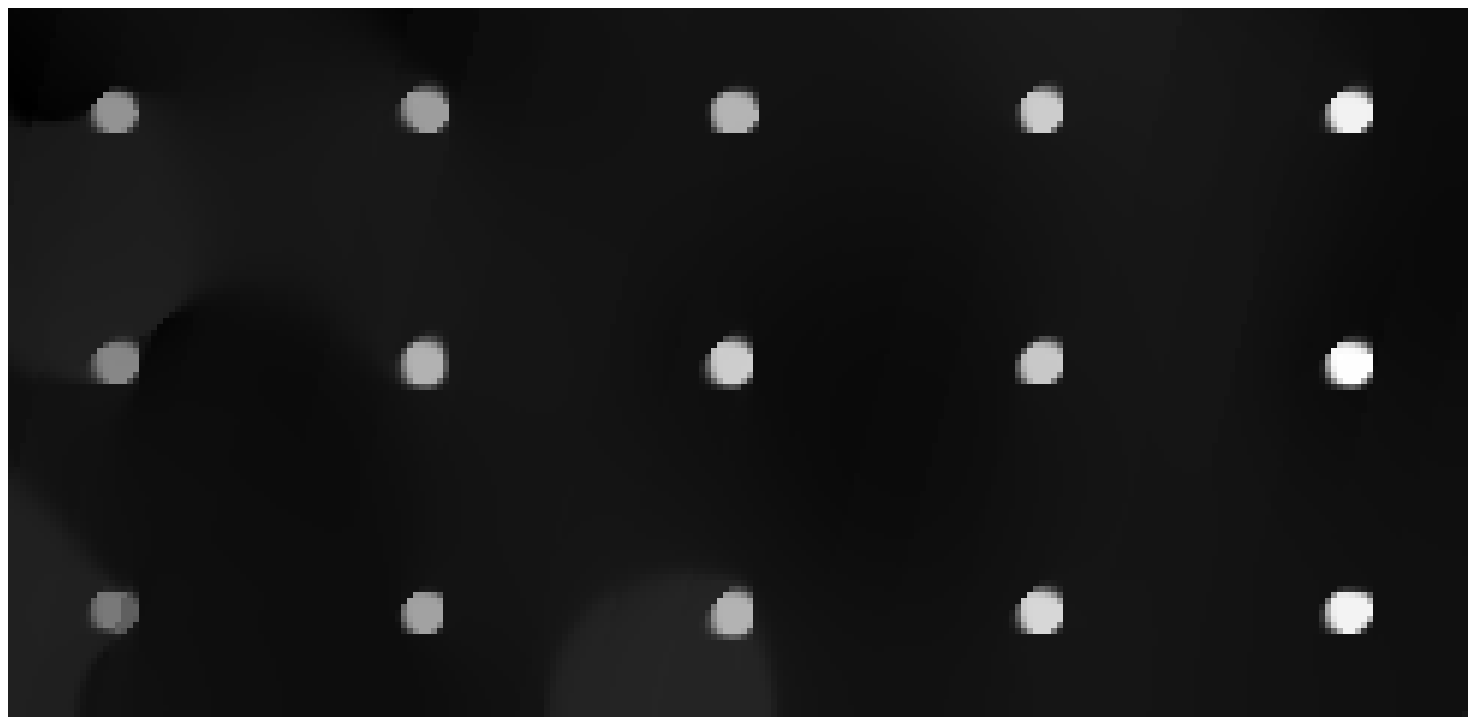}} &
\subfloat[Spectral band separation, layer 2]{\includegraphics[width=40mm]{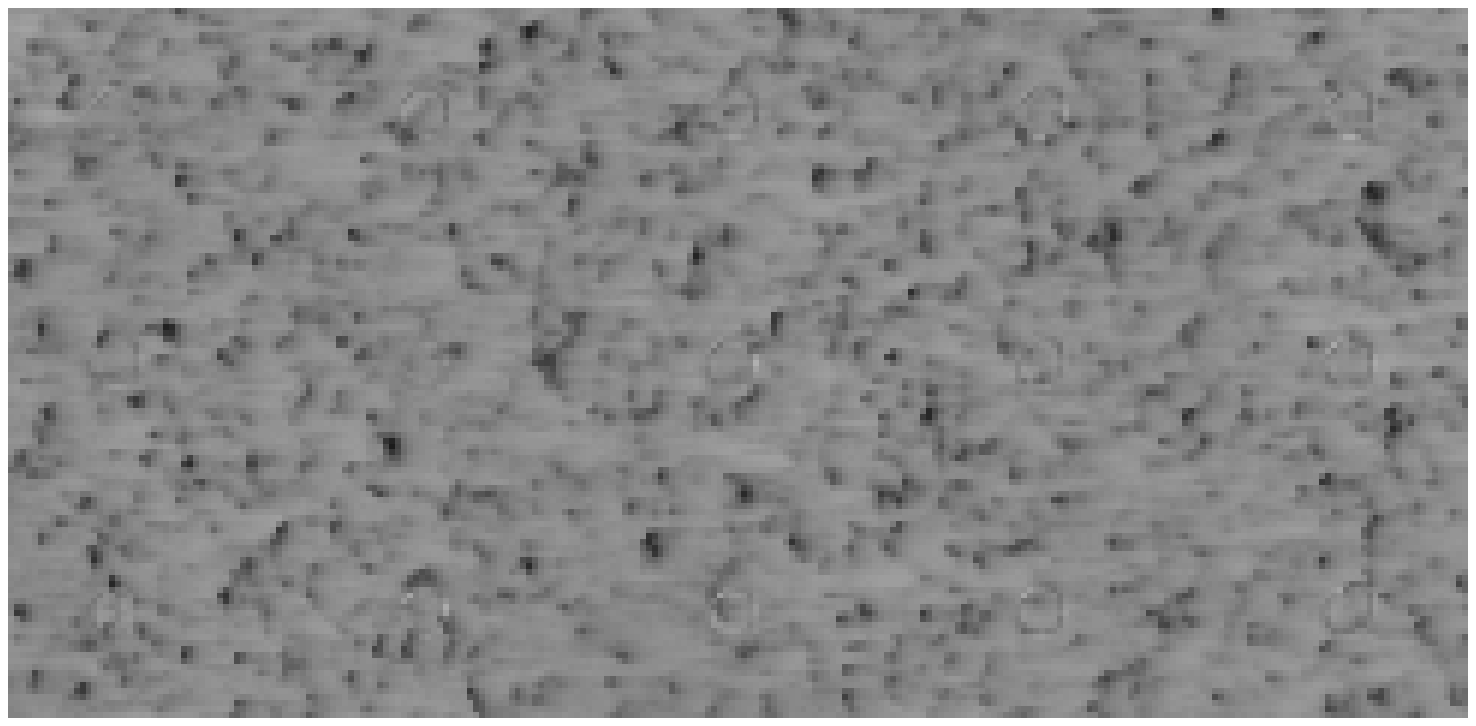}} &
\subfloat[TV-G \cite{Aujol[3]}, layer 1]{\includegraphics[width=40mm]{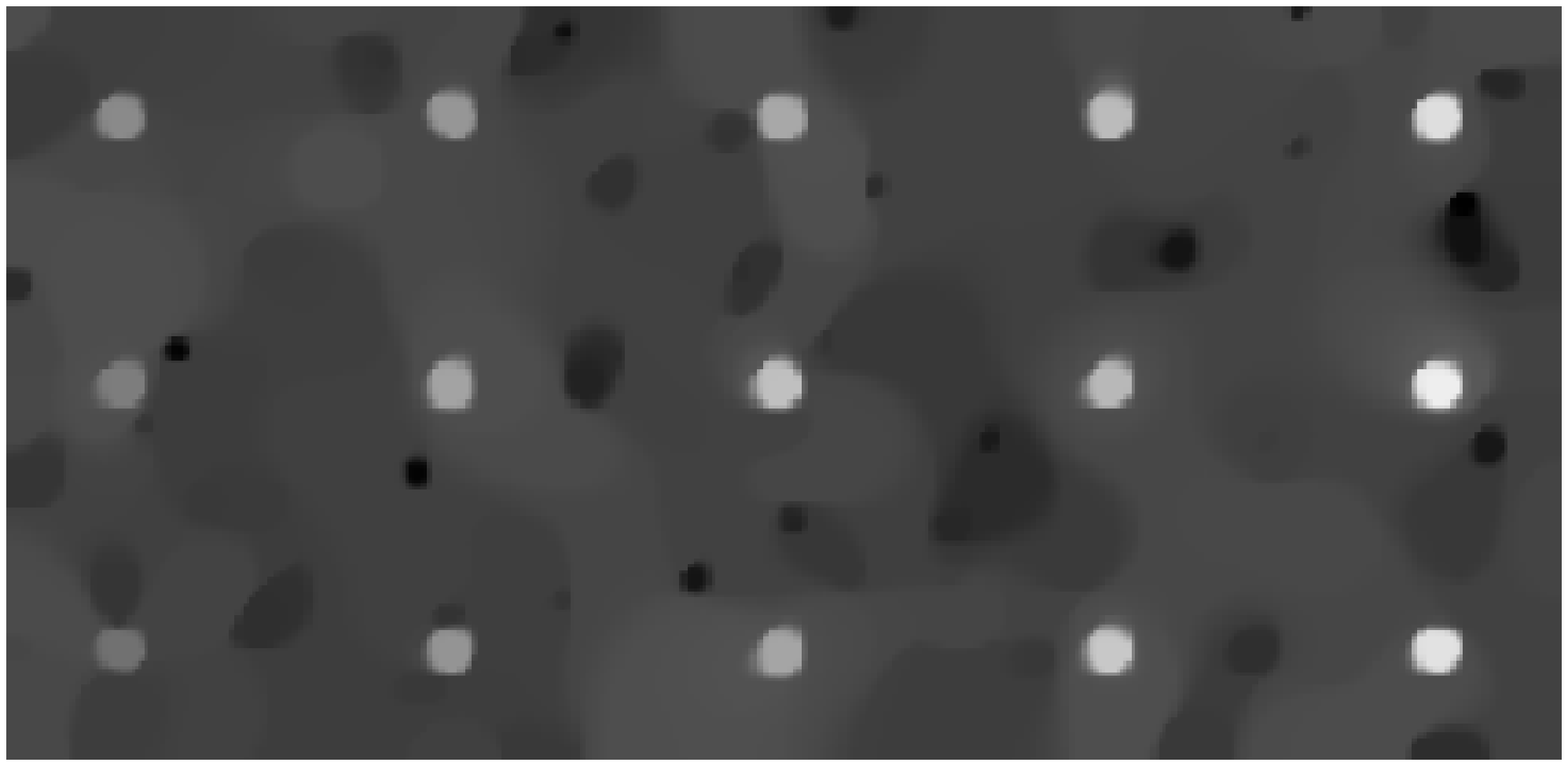}} &
\subfloat[TV-G, layer 2]{\includegraphics[width=40mm]{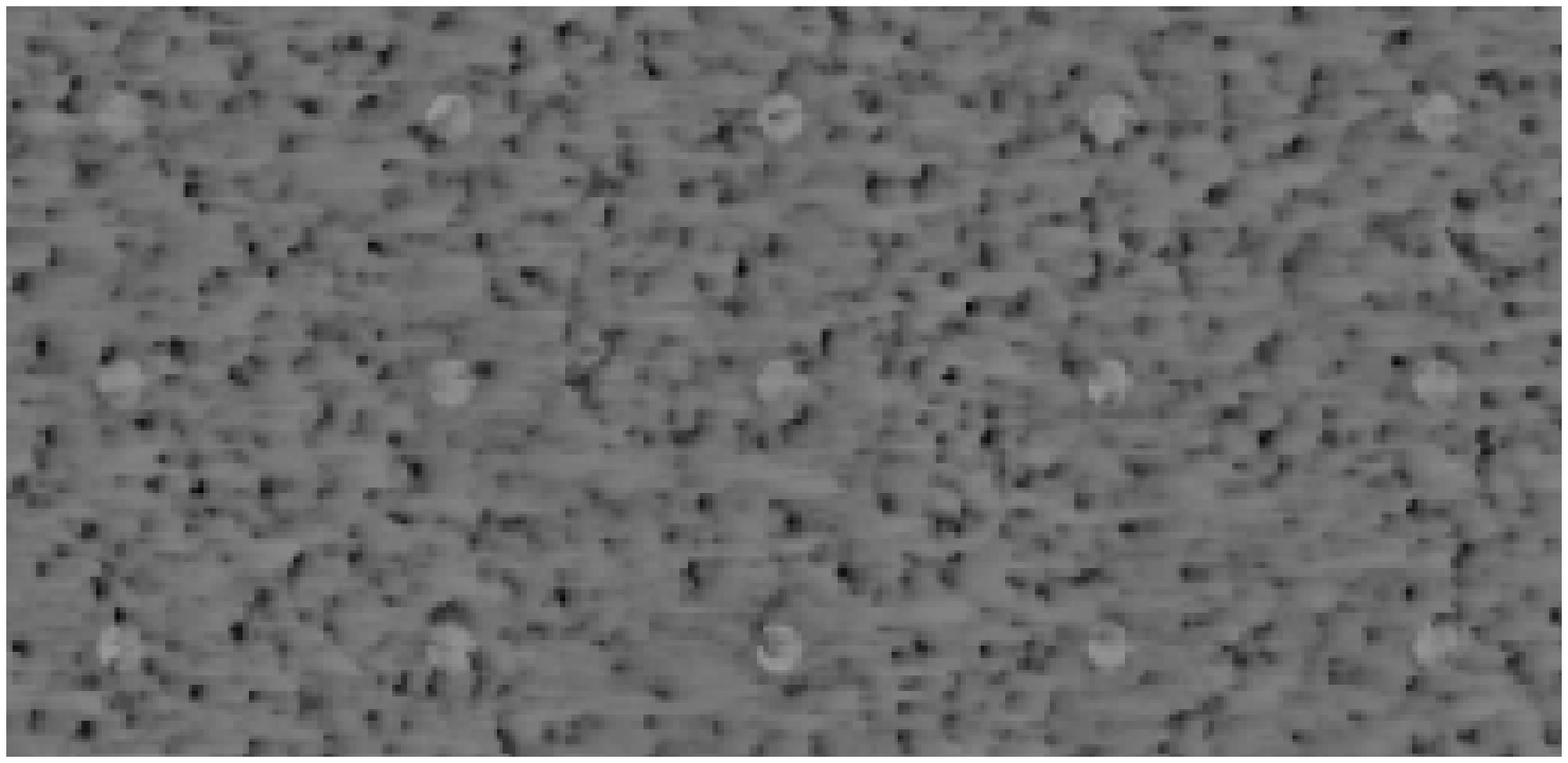}}
\end{tabular}
\caption[Decomposition by spectral analysis] {{Decomposition by a separation band in the spectral domain.} (a) Input image, (b) The maximal $\phi$ response, (c) The separation band, (d-e) spectral decomposition, (f-g) TV-G decomposition. Taken from \cite{HoreshGilboa_submitted}.  }
\label{fig:text1504_spec}
\end{figure*}

In \cite{HoreshGilboa_submitted,Horesh_thesis} a unique way of using the spectral representation was suggested for texture processing.
It was shown that textures with gradually varying pattern-size, pattern-contrast or illumination can be represented by surfaces in the
three dimensional TV transform domain. A spatially varying texture decomposition amounts to estimating
a surface which represents significant maximal response of $\phi$, within a time range $[t_1,t_2]$,
$$\max_{t\in [t_1,t_2]} \{\phi(t;x)\} > \epsilon $$
 for each spatial coordinate $x$.
In Fig. \ref{fig:wall_spec} a wall is shown with gradually varying texture scale and its scale representation in the spectral domain.
A decomposition of structures with gradually varying contrast is shown in Fig. \ref{fig:text1504_spec} where the band-spectral decomposition
(spatially varying scale separation) is compared to the best TV-G separation of \cite{Aujol[3]}.

\section{Discussion and Conclusion}
In this paper we presented the rationale for analyzing one-homogeneous variational problems through a spectral approach.
It was shown that solutions of the generalized nonlinear eigenvalue problem \eqref{eq:ef_problem} are a fundamental part of this analysis.
The quadratic (Dirichlet) energy yields Fourier frequencies as solutions, and thus eigenfunctions of one-homogeneous functionals can be viewed as
new non-linear extensions of classical frequencies.

Analogies to the Fourier case, to wavelets and to dictionary representations were drawn.
However, the theory is only beginning to be formed and there are many open theoretical problems, a few examples are:
\begin{enumerate}
\item More exact relations of the one-homogeneous spectral representations to basis and frames representations.
\item Understanding the difference between the gradient-flow, variational and inverse-scale-space representations.
\item Consistency of the decomposition: If we filter the spectral decomposition of some input data $f$, e.g. by an ideal low pass filter at frequency $T$, and apply the spectral decomposition again, will we see any frequencies greater than $T$?
\item Can we show some kind of orthogonality of the spectral representation $\phi$?
\item Spectral analysis of random noise.
\item Can the theory be extended from the one-homogeneous case to the general convex one?
\end{enumerate}

In addition, there are many practical aspects, such as
\begin{enumerate}
\item Learning the regularization on a training set with the goal to separate certain features.
\item Numerical issues, computing $\phi$ in a stable manner (as it involves a second derivative in time), also can one design
better schemes than the ones given here, which are less local and converge fast for any eigenfunction.
\item Additional applications where the new representations can help in better design of variational algorithms.
\end{enumerate}

Some initial applications related to filter design and to texture processing were shown.
This direction seems as a promising line of research which can aid in better understanding of variational processing
and that has a strong potential to provide alternative improved algorithms.

\small{
\bibliography{refs}

}
\end{document}